\theoremstyle{plain}
\newtheorem{theorem}{Theorem}[section]
\newtheorem*{theorem-nn}{Theorem}
\newtheorem{lemma}[theorem]{Lemma}
\newtheorem*{proposition-nn}{Proposition}
\newtheorem{corollary}[theorem]{Corollary}
\theoremstyle{definition}
\newtheorem{remark}[theorem]{Remark}
\theoremstyle{remark}
\newcommand{\bZ}{\mathbbm{Z}}\newcommand{\bQ}{\mathbbm{Q}}
\newcommand{\bG}{\mathbbm{G}}
\newcommand{\bP}{\mathbbm{P}}
\newcommand{\GL}{{\rm GL}}
\newcounter{sub}
{\begin{list}{(\arabic{sub})}{\usecounter{sub}%
\setlength{\leftmargin}{2em}}}{\end{list}}
\title[Rationality problem for norm one tori for dihedral extensions]
{Rationality problem for norm one tori for dihedral extensions}
\author[A. Hoshi]{Akinari Hoshi}
\address{Department of Mathematics, Niigata University, Niigata 950-2181, Japan}
\email{hoshi@math.sc.niigata-u.ac.jp}
\author[A. Yamasaki]{Aiichi Yamasaki}
\address{Department of Mathematics, Kyoto University, Kyoto 606-8502, Japan}
\email{aiichi.yamasaki@gmail.com}
\thanks{{\it Key words and phrases.} Rationality problem, 
algebraic tori, norm one tori, 
stably rational, retract rational, flabby resolution.\\ 
This work was partially supported by JSPS KAKENHI Grant Numbers 
19K03418, 20H00115, 20K03511. 
}
\subjclass[2010]{Primary 11E72, 12F20, 13A50, 14E08, 20C10, 20G15.}
\begin{document}
\begin{abstract}
We give a complete answer to the rationality problem 
(up to stable $k$-equivalence) 
for norm one tori $R^{(1)}_{K/k}(\bG_m)$ of $K/k$ 
whose Galois closures $L/k$ are dihedral extensions 
with the aid of Endo and Miyata \cite[Theorem 1.5, Theorem 2.3]{EM75} 
and Endo \cite[Theorem 2.1]{End11}. 
By using a similar technique, we give 
refinements of the proof 
of stably rational cases of Endo and Miyata's theorems 
as an appendix of the paper. 
\end{abstract}

\maketitle

\tableofcontents

\section{Introduction}\label{seInt}

Let $k$ be a field and $X$ be an integral $k$-variety 
of dimension $n$. 
The L\"uroth problem asks whether a $k$-unirational variety $X$ 
is $k$-rational, 
i.e. $X$ is $k$-birational to $\bP^n_k$ 
if there exists a dominant rational map $\bP^n_k\dashrightarrow X$. 
Over an algebraically closed field $k$ with char $k=0$, 
examples of $k$-unirational varieties which are not $k$-rational 
are given by 
Clemens and Griffiths \cite{CG72}, 
Iskovskikh and Manin \cite{IM71}, 
Artin and Mumford \cite{AM72}. 
See e.g. 
Manin \cite{Man86}, 
Manin and Tsfasman \cite{MT86}, 
Iskovskikh \cite{Isk97}, 
Colliot-Th\'{e}l\`{e}ne and Sansuc \cite[Section 1]{CTS07}, 
Beauville \cite{Bea16}, 
Voisin \cite{Voi16}. 
Over an algebraically non-closed field $k$, 
Swan \cite[Theorem 1]{Swa69} 
gave the first counterexamples to Noether's problem 
over $\bQ$ which are not $\bQ$-rational but $\bQ$-unirational varieties 
whose function fields are isomorphic to 
the field of invariants $\bQ(x_1,\ldots,x_p)^{C_p}$ where 
$C_p$ is the cyclic group of prime order $p$ with $p=47, 113, 233$. 
We see that $\bQ(x_1,\ldots,x_p)^{C_p}=L(M)^{C_{p-1}}(t)$ where 
$L(M)^{C_{p-1}}$ can be understood as the function field $\bQ(T)$ of 
an algebraic $\bQ$-torus $T$ of dimension $p-1$ via Galois descent where 
$C_{p-1}={\rm Gal}(L/\bQ)$ and $L=\bQ(\zeta_p)$ 
is the $p$-th cyclotomic field 
(see 
survey papers Swan \cite{Swa83} and Hoshi \cite[Section 2]{Hos20}). 
For algebraic group varieties $X$, 
examples of $k$-unirational varieties which are not $k$-rational 
are going back to Chevalley. 
Indeed, Chevalley \cite[Section V]{Che54} gave examples 
of norm one tori $R^{(1)}_{K/k}(\bG_m)$ of $K/k$ 
which are not $k$-rational but $k$-unirational 
where $k=\bQ_p$ is the field of $p$-adic numbers 
and $K/k$ are non-cyclic abelian extensions. 
Later, Endo and Miyata \cite{EM75} gave a necessary and sufficient 
condition for stably/retract rationality 
of norm one tori $R^{(1)}_{K/k}(\bG_m)$ 
of Galois extensions $K/k$ 
(see Theorem \ref{th2-2} and Theorem \ref{th2-3}). 
In particular, $R^{(1)}_{K/k}(\bG_m)$ is not (retract) $k$-rational 
but $k$-unirational 
if some $p$-Sylow subgroup of $G={\rm Gal}(K/k)$ is not cyclic. 

The Zariski problem asks whether a stably $k$-rational variety $X$ 
is $k$-rational, 
i.e. $X$ is $k$-birational to $\bP^n_k$ 
if $X\times_k \bP^m_k$ is $k$-birational to $\bP^{m+n}_k$. 
By using Galois lattice arguments (algebraic $k$-torus technique) on ${\rm Pic}\,\overline{X}$ 
attached to the dihedral group $D_n$ of order $2n$ $(n\geq 3:$ odd$)$, 
for an algebraically non-closed field $k$ (resp. an algebraically closed field $k$) 
with ${\rm char}\, k\neq 2$, 
examples of not $k$-rational but stably $k$-rational varieties $X$ with $X\times_k\bP^3$ $k$-rational 
are given by 
Beauville, Colliot-Th\'el\`ene, Sansuc and Swinnerton-Dyer \cite{BCTSSD85} 
as (generalized) Ch\^{a}telet surfaces given by $y^2-az^2=P(x)\in k[x]$ 
where ${\rm deg}_x P(x)=n\geq 3$ and ${\rm Gal}(P(x)/k)\simeq D_n$ with $n=3$ 
(resp. $3$-folds given by $y^2-a(t)z^2=P(t,x)\in k(t)[x]$ where 
${\rm deg}_x P(t,x)=3$). 
%
%
Indeed, 
stable $k$-rationality of $X$ is obtained by constructing 
a $k$-rational torsor $\mathcal{F}$ on $X$ under $S_0$  
which is $k$-birational to 
$X\times_k S_0$ where $S_0$ is an algebraic $k$-torus with 
${\rm Hom}(S_0,\bG_m)\simeq {\rm Pic}\,\overline{X}$ for $n=3$ 
(see also Colliot-Th\'el\`ene and Sansuc \cite{CTS80}) and 
${\rm Pic}\, \overline{X}$ is stably permutation for odd $n\geq 3$ 
although non-rationality of $X$ needs results of 
Iskovskikh \cite{Isk67}, \cite{Isk70}, \cite{Isk72} over $k\neq\overline{k}$ 
(resp. Clemens and Griffiths \cite{CG72} and 
Beauville \cite{Bea77a}, \cite{Bea77b} over $k=\overline{k}$). 
See \cite{BCTSSD85} for details and also Shepherd-Barron \cite{She04}. 
%
%
We also refer to 
Benoist and Wittenberg \cite{BW20}, \cite{BW23} which investigate 
Clemens-Griffiths method, 
intermediate Jacobians and rationality over non-closed field $k$. 

Let $K/k$ be a separable field extension of degree $m$ 
and $L/k$ be the Galois closure of $K/k$. 
Let $G={\rm Gal}(L/k)$ and $H={\rm Gal}(L/K)$ with $[G:H]=m$. 
The Galois group $G$ may be regarded as a transitive subgroup of 
the symmetric group $S_m$ of degree $m$. 
We may assume that 
$H$ is the stabilizer of one of the letters in $G\leq S_m$, 
i.e. $L=k(\theta_1,\ldots,\theta_m)$ and $K=L^H=k(\theta_i)$ 
where $1\leq i\leq m$. 
Then we have $\bigcap_{\sigma\in G} H^\sigma=\{1\}$ 
where $H^\sigma=\sigma^{-1}H\sigma$ and hence 
$H$ contains no normal subgroup of $G$ except for $\{1\}$. 

Let $T=R^{(1)}_{K/k}(\bG_m)$ be the norm one torus of $K/k$,
i.e. the kernel of the norm map $R_{K/k}(\bG_m)\rightarrow \bG_m$ where 
$R_{K/k}$ is the Weil restriction (see \cite[page 37, Section 3.12]{Vos98}). 
Note that $R_{K/k}(\bG_m)$ is $k$-rational with ${\rm dim}\, R_{K/k}(\bG_m)=[K:k]=m$ 
and 
${\rm dim}\, T=m-1$. 
Let $D_n$ be the dihedral group of order $2n$ $(n\geq 3)$ 
and $Z(D_n)$ be the center of $D_n$. 
In this paper, we investigate the rationality problem for 
$R^{(1)}_{K/k}(\bG_m)$ with $G\simeq D_n$. 
For $G\simeq D_n$, by the condition $\bigcap_{\sigma\in G} H^\sigma=\{1\}$, 
we obtain that $H=\{1\}$ or $|H|=2$ with $H\neq Z(D_n)$. 
When $H=\{1\}$, i.e. $K/k$ is Galois, 
Endo and Miyata \cite[Theorem 2.3, Theorem 1.5]{EM75} proved that 
$R^{(1)}_{K/k}(\bG_m)$ is stably $k$-rational 
(resp. not retract $k$-rational) if $n$ is odd (resp. even). 
When $|H|=2$ with $H\neq Z(D_n)$, 
Endo \cite[Theorem 3.1, Theorem 2.1]{End11} showed that 
$R^{(1)}_{K/k}(\bG_m)$ is stably $k$-rational 
(resp. not retract $k$-rational) if $n$ is odd (resp. $2$-power). 
See Section \ref{sePre} for details. 

The main theorem of this paper 
gives a complete answer to the rationality problem 
(up to stable $k$-equivalence) 
for norm one tori $R^{(1)}_{K/k}(\bG_m)$ of $K/k$ 
whose Galois closures $L/k$ are dihedral extensions 
with the aid of Endo and Miyata \cite[Theorem 2.3, Theorem 1.5]{EM75} 
and Endo \cite[Theorem 3.1]{End11}: 
\begin{theorem}\label{mainth}
Let $K/k$ be a finite non-Galois separable field extension 
and $L/k$ be the Galois closure of $K/k$. 
Let $D_n=\langle x,y\mid x^n=y^2=1,y^{-1}xy=x^{-1}\rangle$ 
be the dihedral group of order $2n$ $(n\geq 4:$ even$)$ 
and $Z(D_n)=\langle x^{\frac{n}{2}}\rangle$ be the center of $D_n$. 
Assume that $G={\rm Gal}(L/k)\simeq D_n$  
and $H={\rm Gal}(L/K)\leq G$. 
Then $|H|=2$ with $H\neq Z(D_n)$ and we have:\\
{\rm (i)} if $n\equiv 0\ ({\rm mod}\ 4)$, then 
$R^{(1)}_{K/k}(\bG_m)$ is not retract $k$-rational;\\
{\rm (ii)} if $n\equiv 2\ ({\rm mod}\ 4)$, then 
$R^{(1)}_{K/k}(\bG_m)$ is stably $k$-rational. 
More precisely, 
$R^{(1)}_{K/k}(\bG_m)\times R_{k_1/k}(\bG_{m,k_1})\times 
R_{k_2/k}(\bG_{m,k_2})\times R_{k_3/k}(\bG_{m,k_3})$ 
of dimension $(n-1)+\frac{n}{2}+\frac{n}{2}+2=2n+1$ and 
$R_{k_4/k}(\bG_{m,k_4})\times R_{k_5/k}(\bG_{m,k_5})\times \bG_{m,k}$ 
of dimension $n+n+1=2n+1$ 
are birationally $k$-equivalent where 
$R_{k_i/k}(\bG_{m,k_i})$ are $k$-rational with 
$k_1=L^{\langle x^{\frac{n}{2}},x^2y\rangle}$, 
$k_2=L^{\langle x^{\frac{n}{2}},xy\rangle}$, 
$k_3=L^{\langle x\rangle}$, 
$k_4=L^{\langle x^{\frac{n}{2}}\rangle}$, 
$k_5=L^{\langle xy\rangle}$, 
$\langle x^{\frac{n}{2}},x^2y\rangle\simeq 
\langle x^{\frac{n}{2}},xy\rangle \simeq C_2\times C_2$, 
$\langle x\rangle\simeq C_n$, 
$\langle x^{\frac{n}{2}}\rangle\simeq \langle xy\rangle\simeq C_2$ 
and 
$[k_1:k]=[k_2:k]=\frac{n}{2}$, $[k_3:k]=2$, $[k_4:k]=[k_5:k]=n$. 
\end{theorem}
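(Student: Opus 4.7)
The proof uses the flabby-class dictionary for algebraic tori. Set $M=\widehat{T}=J_{G/H}$, the kernel of the augmentation $\bZ[G/H]\to\bZ$. By the theorems of Endo--Miyata and Voskresenskii, $T=R^{(1)}_{K/k}(\bG_m)$ is retract (resp.\ stably) $k$-rational iff the flabby class $[M^\circ]$ is invertible (resp.\ zero), and a stable $k$-equivalence of the form asserted in (ii) is equivalent to a short exact sequence of $G$-lattices with permutation-lattice endpoints given by the prescribed $\bZ[G/H_i]$.

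For (i), write $n=2^am$ with $a\geq 2$ and $m$ odd, and let $G_2\simeq D_{2^a}$ be a Sylow $2$-subgroup of $G=D_n$. Since $|H|=2$, we may assume after conjugation that $H\leq G_2$; since $H\neq Z(D_n)=Z(G_2)$, $H$ is a non-central involution in $G_2$. Retract rationality is preserved under base change, so it suffices to show that $T\times_k L^{G_2}$ is not retract $L^{G_2}$-rational. The character lattice of $T\times_k L^{G_2}$ is $M|_{G_2}$; via Mackey, $\bZ[G/H]|_{G_2}$ is a direct sum of permutation $G_2$-lattices whose stabilizers are (up to $G_2$-conjugacy) either $H$, trivial, or other non-central involutions of $G_2$. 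Endo \cite[Theorem 2.1]{End11} applied to $(D_{2^a},H)$ exhibits a subgroup $G''\leq G_2$ with $\widehat{H}^{-1}(G'',J_{G_2/H}^\circ)\neq 0$, and by choosing $G''$ carefully so that the extraneous Mackey summands contribute only invertible classes, the obstruction transfers to $[M^\circ|_{G_2}]$ and hence to $[M^\circ]$.

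For (ii), write $n=2m$ with $m$ odd; then $\gcd(2,m)=1$ gives the internal direct product $D_n=\langle x^{n/2}\rangle\times\langle x^2,y\rangle\simeq C_2\times D_m$, with $C_2=Z(D_n)$. I would first identify each of $H,H_1,\ldots,H_5$ in terms of its projections to the two factors. The core step is then to construct an explicit $G$-equivariant short exact sequence
\[
0\to J_{G/H}\oplus \bZ[G/H_1]\oplus \bZ[G/H_2]\oplus \bZ[G/H_3]\to \bZ[G/H_4]\oplus \bZ[G/H_5]\oplus \bZ\to 0,
\]
whose ranks balance as $(n-1)+\tfrac{n}{2}+\tfrac{n}{2}+2=2n+1=n+n+1$. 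Producing such a sequence delivers the desired stable $k$-equivalence, because each $R_{k_i/k}(\bG_{m,k_i})$ is $k$-rational and Voskresenskii's theorem converts the lattice identity into a birational equivalence of tori.

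The main obstacle is the explicit construction in (ii): an abstract stable-rationality argument alone does not suffice, since the theorem specifies the permutation summands indexed by $H_1,\ldots,H_5$. A natural strategy is to combine the Endo--Miyata stably-permutation resolution of $J_{D_m/H'}$ for odd $m$ (\cite[Theorem 2.3]{EM75}) with the elementary resolution of the augmentation lattice for the central $C_2$-factor, and then carefully regroup the resulting permutation pieces into the prescribed form. The hypothesis $n\equiv 2\pmod 4$ enters precisely in this splicing: it is what makes $C_2$ and $D_m$ coprime and hence separable, while the failure of this decomposition in the case $n\equiv 0\pmod 4$ is what produces the obstruction in (i).
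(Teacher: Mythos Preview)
Your proposal has gaps in both parts, and in (ii) a genuine error.

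\textbf{Part (i).} Restricting to a Sylow $2$-subgroup $G_2\simeq D_{2^a}$ forces you through a Mackey decomposition of $\bZ[G/H]|_{G_2}$ with several summands, and your sentence ``choosing $G''$ carefully so that the extraneous Mackey summands contribute only invertible classes'' is exactly the step you have not done. The paper avoids this entirely by restricting instead to the index-$2$ subgroup $G'=\langle x^2,y\rangle\simeq D_m$ (with $n=2m$, $m$ even). Since $xy\notin G'$ one has $H\cap G'=\{1\}$, and as $|G'|=[G:H]=n$ the group $G'$ acts simply transitively on $G/H$. Hence $\bZ[G/H]|_{G'}\simeq\bZ[G']$ and $J_{G/H}|_{G'}\simeq J_{G'}$, reducing at once to the \emph{Galois} case for $D_m$; since $m$ is even the Sylow $2$-subgroup of $D_m$ is non-cyclic, and Endo--Miyata \cite[Theorem 1.5]{EM75} gives non-invertibility of $[J_{G'}]^{fl}$. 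No Mackey bookkeeping is needed.

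\textbf{Part (ii).} Your displayed ``short exact sequence'' has only two nonzero terms, i.e.\ it asserts an isomorphism $J_{G/H}\oplus\bZ[G/H_1]\oplus\bZ[G/H_2]\oplus\bZ[G/H_3]\simeq\bZ[G/H_4]\oplus\bZ[G/H_5]\oplus\bZ$. This is false: from $0\to\bZ\to\bZ[G/H]\to J_{G/H}\to 0$ and Shapiro one computes $H^1(G,J_{G/H})\simeq\ker\bigl(\Hom(G,\bQ/\bZ)\to\Hom(H,\bQ/\bZ)\bigr)\simeq\bZ/2\bZ$ (here $G^{ab}\simeq C_2\times C_2$ since $n$ is even). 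Thus $J_{G/H}$ is not coflabby, hence not stably permutation, and no such isomorphism can exist. What the paper actually produces is a genuine three-term flabby resolution
\[
0\;\longrightarrow\; J_{G/H}\;\longrightarrow\; (P)^\circ\oplus\bZ\;\longrightarrow\; (C)^\circ\oplus\bZ\;\longrightarrow\; 0
\]
with $(P)^\circ\oplus\bZ\simeq\bZ[G/H_4]\oplus\bZ[G/H_5]\oplus\bZ$ and $(C)^\circ\oplus\bZ\simeq\bZ[G/H_1]\oplus\bZ[G/H_2]\oplus\bZ[G/H_3]$ both permutation; the birational statement then follows from \cite[Proposition 1.10]{EM73} and \cite[Proposition 1.5]{Len74}, not from Voskresenskii's stable-equivalence criterion. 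The substantive work is an explicit surjection $\varphi:\bZ[G/Z(G)]\oplus\bZ[G/H]\twoheadrightarrow I_{G/H}$ and an explicit $\bZ$-basis of $C=\ker\varphi$ showing that $C\oplus\bZ$ is permutation; your ``splice the $D_m$ resolution with the $C_2$ resolution'' plan does not by itself yield these specific summands.

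A small side point: $J_{G/H}$ is not the augmentation kernel $I_{G/H}$ but its dual, i.e.\ the cokernel of $\bZ\hookrightarrow\bZ[G/H]$.
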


We organize this paper as follows.  
In Section \ref{sePre}, 
we prepare related materials and known results including some examples
in order to prove Theorem \ref{mainth}. 
In Section \ref{seProof}, 
the proof of Theorem \ref{mainth} is given. 
In Section \ref{seAlt}, 
we give 
refinements of the proof of 
stably rational cases with 
$G\simeq D_n$ $(n\geq 3:$ odd$)$ of Endo and Miyata's theorems 
\cite[Theorem 2.3]{EM75} with $H=\{1\}$ and 
\cite[Theorem 3.1]{End11} with $H\simeq C_2$, 
using a technique similar to that given in the proof 
of Theorem \ref{mainth}. 

\section{Preliminaries}\label{sePre}

Let $k$ be a field and $K$ 
be a finitely generated field extension of $k$. 
A field $K$ is called {\it rational over $k$} 
(or {\it $k$-rational} for short) 
if $K$ is purely transcendental over $k$, 
i.e. $K$ is isomorphic to $k(x_1,\ldots,x_n)$, 
the rational function field over $k$ with $n$ variables $x_1,\ldots,x_n$ 
for some integer $n$. 
$K$ is called {\it stably $k$-rational} 
if $K(y_1,\ldots,y_m)$ is $k$-rational for some algebraically 
independent elements $y_1,\ldots,y_m$ over $K$. 
Two fields 
$K$ and $K^\prime$ are called {\it stably $k$-isomorphic} if 
$K(y_1,\ldots,y_m)\simeq K^\prime(z_1,\ldots,z_n)$ over $k$ 
for some algebraically independent elements $y_1,\ldots,y_m$ over $K$ 
and $z_1,\ldots,z_n$ over $K^\prime$. 
When $k$ is an infinite field, 
$K$ is called {\it retract $k$-rational} 
if there is a $k$-algebra $R$ contained in $K$ such that 
(i) $K$ is the quotient field of $R$, and (ii) 
the identity map $1_R : R\rightarrow R$ factors through a localized 
polynomial ring over $k$, i.e. there is an element $f\in k[x_1,\ldots,x_n]$, 
which is the polynomial ring over $k$, and there are $k$-algebra 
homomorphisms $\varphi : R\rightarrow k[x_1,\ldots,x_n][1/f]$ 
and $\psi : k[x_1,\ldots,x_n][1/f]\rightarrow R$ satisfying 
$\psi\circ\varphi=1_R$ (cf. Saltman \cite{Sal84}). 
$K$ is called {\it $k$-unirational} 
if $k\subset K\subset k(x_1,\ldots,x_n)$ for some integer $n$. 
It is not difficult to see that
\begin{center}
$k$-rational \ \ $\Rightarrow$\ \ 
stably $k$-rational\ \ $\Rightarrow$ \ \ 
retract $k$-rational\ \ $\Rightarrow$ \ \ 
$k$-unirational.
\end{center} 

Let $\overline{k}$ be a fixed separable closure of the base field $k$. 
Let $T$ be an algebraic $k$-torus, 
i.e. a group $k$-scheme with fiber product (base change) 
$T\times_k \overline{k}=
T\times_{{\rm Spec}\, k}\,{\rm Spec}\, \overline{k}
\simeq (\bG_{m,\overline{k}})^n$; 
$k$-form of the split torus $(\bG_m)^n$. 
An algebraic $k$-torus 
$T$ is said to be {\it $k$-rational} (resp. {\it stably $k$-rational}, 
{\it retract $k$-rational}, {\it $k$-unirational}) 
if the function field $k(T)$ of $T$ over $k$ is $k$-rational 
(resp. stably $k$-rational, retract $k$-rational, $k$-unirational). 
Two algebraic $k$-tori $T$ and $T^\prime$ 
are said to be 
{\it birationally $k$-equivalent $(k$-isomorphic$)$} 
if their function fields $k(T)$ and $k(T^\prime)$ are 
$k$-isomorphic. 
For an equivalent definition in the language of algebraic geometry, 
see e.g. 
Manin \cite{Man86}, 
Manin and Tsfasman \cite{MT86}, 
Colliot-Th\'{e}l\`{e}ne and Sansuc \cite[Section 1]{CTS07}, 
Beauville \cite{Bea16}, Merkurjev \cite[Section 3]{Mer17}. 

Let $L$ be a finite Galois extension of $k$ and $G={\rm Gal}(L/k)$ 
be the Galois group of the extension $L/k$. 
Let $M=\bigoplus_{1\leq i\leq n}\bZ\cdot u_i$ be a $G$-lattice with 
a $\bZ$-basis $\{u_1,\ldots,u_n\}$, 
i.e. finitely generated $\bZ[G]$-module 
which is $\bZ$-free as an abelian group. 
Let $G$ act on the rational function field $L(x_1,\ldots,x_n)$ 
over $L$ with $n$ variables $x_1,\ldots,x_n$ by 
\begin{align}
\sigma(x_i)=\prod_{j=1}^n x_j^{a_{i,j}},\quad 1\leq i\leq n\label{acts}
\end{align}
for any $\sigma\in G$, when $\sigma (u_i)=\sum_{j=1}^n a_{i,j} u_j$, 
$a_{i,j}\in\bZ$. 
The field $L(x_1,\ldots,x_n)$ with this action of $G$ will be denoted 
by $L(M)$.
There is the duality between the category of $G$-lattices 
and the category of algebraic $k$-tori which split over $L$ 
(see \cite[Section 1.2]{Ono61}, \cite[page 27, Example 6]{Vos98}). 
In fact, if $T$ is an algebraic $k$-torus, then the character 
group $\widehat{T}={\rm Hom}(T,\bG_m)$ of $T$ 
may be regarded as a $G$-lattice. 
Conversely, for a given $G$-lattice $M$, there exists an algebraic 
$k$-torus $T={\rm Spec}(L[M]^G)$ which splits over $L$ 
such that $\widehat{T}\simeq M$ as $G$-lattices. 

The invariant field $L(M)^G$ of $L(M)$ under the action of $G$ 
may be identified with the function field $k(T)$ 
of the algebraic $k$-torus $T$ where $\widehat{T}\simeq M$. 
Note that the field $L(M)^G$ is always $k$-unirational 
(see \cite[page 40, Example 21]{Vos98}). 
Isomorphism classes of tori of dimension $n$ over $k$ correspond bijectively 
to the elements of the set $H^1(\mathcal{G},\GL_n(\bZ))$ 
where $\mathcal{G}={\rm Gal}(\overline{k}/k)$ since 
${\rm Aut}(\bG_m^n)=\GL_n(\bZ)$. 
The $k$-torus $T$ of dimension $n$ is determined uniquely by the integral 
representation $h : \mathcal{G}\rightarrow \GL_n(\bZ)$ up to conjugacy, 
and the group $h(\mathcal{G})$ is a finite subgroup of $\GL_n(\bZ)$ 
(see \cite[page 57, Section 4.9]{Vos98})). 

A $G$-lattice $M$ is called {\it permutation} $G$-lattice 
if $M$ has a $\bZ$-basis permuted by $G$, 
i.e. $M\simeq \oplus_{1\leq i\leq m}\bZ[G/H_i]$ 
for some subgroups $H_1,\ldots,H_m$ of $G$. 
$M$ is called {\it stably permutation} 
$G$-lattice if $M\oplus P\simeq P^\prime$ 
for some permutation $G$-lattices $P$ and $P^\prime$. 
$M$ is called {\it invertible} 
if it is a direct summand of a permutation $G$-lattice, 
i.e. $P\simeq M\oplus M^\prime$ for some permutation $G$-lattice 
$P$ and a $G$-lattice $M^\prime$. 
$M$ is called {\it flabby} (or {\it flasque}) if $\widehat H^{-1}(H,M)=0$ 
for any subgroup $H$ of $G$ where $\widehat H$ is the Tate cohomology. 
$M$ is called {\it coflabby} (or {\it coflasque}) if $H^1(H,M)=0$
for any subgroup $H$ of $G$. 
We say that two $G$-lattices $M_1$ and $M_2$ are {\it similar} 
if there exist permutation $G$-lattices $P_1$ and $P_2$ such that 
$M_1\oplus P_1\simeq M_2\oplus P_2$. 
The set of similarity classes becomes a commutative monoid 
with respect to the sum $[M_1]+[M_2]:=[M_1\oplus M_2]$ 
and the zero $0=[P]$ where $P$ is a permutation $G$-lattice. 
For a $G$-lattice $M$, there exists a short exact sequence of $G$-lattices 
$0 \rightarrow M \rightarrow P \rightarrow F \rightarrow 0$
where $P$ is permutation and $F$ is flabby which is called a 
{\it flabby resolution} of $M$ 
(see Endo and Miyata \cite[Lemma 1.1]{EM75}, 
Colliot-Th\'el\`ene and Sansuc \cite[Lemma 3]{CTS77}, 
Manin \cite[Appendix, page 286]{Man86}). 
The similarity class $[F]$ of $F$ is determined uniquely 
and is called {\it the flabby class} of $M$. 
We denote the flabby class $[F]$ of $M$ by $[M]^{fl}$. 
We say that $[M]^{fl}$ is invertible if $[M]^{fl}=[E]$ for some 
invertible $G$-lattice $E$. 
For $G$-lattice $M$, 
it is not difficult to see that 
\begin{align*}
\textrm{permutation}\ \ 
\Rightarrow\ \ 
&\textrm{stably\ permutation}\ \ 
\Rightarrow\ \ 
\textrm{invertible}\ \ 
\Rightarrow\ \ 
\textrm{flabby\ and\ coflabby}\\
&\hspace*{8mm}\Downarrow\hspace*{34mm} \Downarrow\\
&\hspace*{7mm}[M]^{fl}=0\hspace*{10mm}\Rightarrow\hspace*{5mm}[M]^{fl}\ 
\textrm{is\ invertible}.
\end{align*}

For algebraic $k$-tori $T$, we see that 
$[\widehat{T}]^{fl}=[{\rm Pic}\,\overline{X}]$ 
where $X$ is a smooth $k$-compactification of $T$, 
i.e. smooth projective $k$-variety $X$ containing $T$ as a dense 
open subvariety, 
and $\overline{X}=X\times_k\overline{k}$ 
(see Voskresenskii \cite[Section 4, page 1213]{Vos69}, \cite[Section 3, page 7]{Vos70}, \cite{Vos74}, \cite[Section 4.6]{Vos98}, Kunyavskii \cite[Theorem 1.9]{Kun07} and Colliot-Th\'el\`ene \cite[Theorem 5.1, page 19]{CT07} for any field $k$). 

The flabby class $[M]^{fl}=[\widehat{T}]^{fl}$ 
plays crucial role in the rationality problem 
for $L(M)^G\simeq k(T)$ 
as follows (see 
Colliot-Th\'el\`ene and Sansuc \cite[Section 2]{CTS77}, 
\cite[Proposition 7.4]{CTS87}, 
Voskresenskii \cite[Section 4.6]{Vos98}, 
Kunyavskii \cite[Theorem 1.7]{Kun07}, 
Colliot-Th\'el\`ene \cite[Theorem 5.4]{CT07}, 
Hoshi and Yamasaki \cite[Section 1]{HY17}): 
%
\begin{theorem}
\label{th2-1}
Let $L/k$ be a finite Galois extension with Galois group $G={\rm Gal}(L/k)$ 
and $M$, $M^\prime$ be $G$-lattices.\\
{\rm (i)} $(${\rm Endo and Miyata} \cite[Theorem 1.6]{EM73}$)$ 
$[M]^{fl}=0$ if and only if $L(M)^G$ is stably $k$-rational.\\
{\rm (ii)} $(${\rm Voskresenskii} \cite[Theorem 2]{Vos74}$)$ 
$[M]^{fl}=[M^\prime]^{fl}$ if and only if $L(M)^G$ and $L(M^\prime)^G$ 
are stably $k$-isomorphic. 
{\rm (iii)} $(${\rm Saltman} \cite[Theorem 3.14]{Sal84}$)$ 
$[M]^{fl}$ is invertible if and only if $L(M)^G$ is 
retract $k$-rational.
\end{theorem}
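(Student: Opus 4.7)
The plan is to rephrase both parts via Theorem \ref{th2-1} in terms of the flabby class $[M]^{fl}$ of $M := \widehat{R^{(1)}_{K/k}(\bG_m)}$, which fits in the tautological exact sequence $0 \to \bZ \to \bZ[G/H] \to M \to 0$ obtained by dualizing $1 \to T \to R_{K/k}(\bG_m) \to \bG_m \to 1$. Since $|H|=2$ and $H \neq Z(D_n)$, $H$ is generated by a reflection $x^i y$; up to $G$-conjugacy (which does not affect $[M]^{fl}$, hence stable or retract $k$-equivalence), one of $H = \langle y\rangle$ or $H = \langle xy\rangle$ occurs, and for $n$ even these two conjugacy classes of reflections are distinct, so both must be treated.

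For part (i), where $4 \mid n$, I would show $[M]^{fl}$ is not invertible so that Theorem \ref{th2-1}(iii) forces non-retract-rationality. The pivotal structural principle (standard for flabby classes, due to Endo–Miyata) is that invertibility is inherited under restriction: if $[M]^{fl}$ is invertible as a $G$-lattice, so is $[\mathrm{Res}^G_{G'} M]^{fl}$ for every subgroup $G' \leq G$. Choosing $G' = \langle x^{n/4}, y\rangle \simeq D_4 \leq D_n$ and computing $\mathrm{Res}^G_{G'} \bZ[G/H]$ via Mackey decomposition, I would identify $\mathrm{Res}^G_{G'} M$ with the character module of a norm one torus $R^{(1)}_{K'/k'}(\bG_m)$ attached to a $D_4$-subextension (up to a permutation summand). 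Either as the Galois $D_4$-case (Endo–Miyata \cite[Theorem 1.5]{EM75}) or the $|H'|=2$ $2$-power case (Endo \cite[Theorem 2.1]{End11}), this restricted lattice has a non-invertible flabby class, contradicting invertibility of $[M]^{fl}$.

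For part (ii), where $n = 2m$ with $m$ odd, the engine is the central direct-product decomposition $D_n = \langle x^2, y\rangle \times \langle x^m\rangle \simeq D_m \times C_2$ (valid precisely when $\gcd(2,m)=1$). Endo \cite[Theorem 3.1]{End11} already supplies an explicit stably-trivial flabby resolution of the analogous norm-one-torus lattice over $D_m$. I would pull that resolution back to $G = D_m \times C_2$, then splice it with the obvious permutation short exact sequences associated with the $C_2$-factor and the intermediate subfield tower, so as to build an explicit $G$-lattice isomorphism
$$M \oplus \bZ[G/\langle x^{n/2},x^2 y\rangle] \oplus \bZ[G/\langle x^{n/2},xy\rangle] \oplus \bZ[G/\langle x\rangle] \;\simeq\; \bZ[G/\langle x^{n/2}\rangle] \oplus \bZ[G/\langle xy\rangle] \oplus \bZ,$$
matching the subgroups named in the theorem. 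By Theorem \ref{th2-1}(i)–(ii), this simultaneously yields $[M]^{fl} = 0$ (so $T$ is stably $k$-rational) and the precise product birational equivalence asserted in the statement, whose dimension count is $(n-1)+\tfrac{n}{2}+\tfrac{n}{2}+2 = 2n+1 = n+n+1$.

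I expect the main obstacle to lie in part (ii): the rank match alone is not sufficient, and one must honestly verify compatibility of the $G$-action for each summand, i.e.\ pin down the correct permutation modules $\bZ[G/H_i]$ by tracking how the subgroup lattice of $D_n$ decomposes under the splitting $D_n \simeq D_m \times C_2$. Part (i) by contrast should reduce cleanly once the $D_4$-restriction is correctly identified with one of Endo's explicit non-invertible $2$-power cases; the only care needed there is in executing the Mackey decomposition so the restriction is indeed a norm-one-torus lattice for a sub-extension with Galois group $D_4$.
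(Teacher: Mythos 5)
Your proposal does not prove the statement it is supposed to prove. The statement at hand is Theorem \ref{th2-1} itself: the general correspondence between the flabby class $[M]^{fl}$ of a $G$-lattice $M$ and the birational behaviour of the invariant field $L(M)^G$ --- namely that $[M]^{fl}=0$ iff $L(M)^G$ is stably $k$-rational (Endo--Miyata), that $[M]^{fl}=[M']^{fl}$ iff $L(M)^G$ and $L(M')^G$ are stably $k$-isomorphic (Voskresenskii), and that $[M]^{fl}$ is invertible iff $L(M)^G$ is retract $k$-rational (Saltman). Your text opens with ``The plan is to rephrase both parts via Theorem \ref{th2-1}\dots'', i.e.\ you take the statement to be proved as a known tool, and then go on to sketch an argument for a different result: the dihedral classification of Theorem \ref{mainth}, with its case split $n\equiv 0$ versus $n\equiv 2 \pmod 4$, the restriction to a $D_4$-subgroup, and the explicit stably permutation resolution. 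That is the content of Section \ref{seProof} of the paper, not of Theorem \ref{th2-1}. As a proof of Theorem \ref{th2-1} the proposal is therefore circular and, beyond the circularity, contains no argument touching any of the three equivalences: nothing about constructing a flabby resolution $0\to M\to P\to F\to 0$, nothing relating $F$ to $\mathrm{Pic}$ of a smooth compactification or to the no-name lemma identifications $L(M\oplus P)^G\simeq L(M)^G(x_1,\dots,x_r)$ for $P$ permutation, and nothing about Saltman's characterization of retract rationality via invertible classes.

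For the record, the paper itself offers no proof of Theorem \ref{th2-1}; it is quoted with attributions to \cite{EM73}, \cite{Vos74} and \cite{Sal84}, and the surrounding discussion only recalls the definitions (flabby resolution, similarity classes, $[\widehat{T}]^{fl}=[\mathrm{Pic}\,\overline{X}]$) needed to state it. If the intended target had been Theorem \ref{mainth} instead, your sketch is broadly aligned with the paper's strategy in case (ii) (an explicit surjection from a permutation lattice onto $I_{G/H}$ whose kernel becomes permutation after adding a trivial summand $\bZ$, using $2u+mv=1$), though the paper works directly with $\varphi\colon \bZ[G/Z(G)]\oplus\bZ[G/H]\to I_{G/H}$ rather than pulling back a $D_m$-resolution through the splitting $D_n\simeq D_m\times C_2$; and in case (i) the paper restricts to $G'=\langle x^2,y\rangle\simeq D_m$ with $H\cap G'=1$ and applies Theorem \ref{th2-2}, rather than restricting to a $D_4$. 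But none of this bears on the statement you were asked to prove.
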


Let $K/k$ be a separable field extension of degree $m$ 
and $L/k$ be the Galois closure of $K/k$. 
Let $G={\rm Gal}(L/k)$ and $H={\rm Gal}(L/K)$ with $[G:H]=m$. 
The norm one torus $R^{(1)}_{K/k}(\bG_m)$ has the 
Chevalley module $J_{G/H}$ as its character module 
and the field $L(J_{G/H})^G$ as its function field 
where $J_{G/H}=(I_{G/H})^\circ={\rm Hom}_\bZ(I_{G/H},\bZ)$ 
is the dual lattice of $I_{G/H}={\rm Ker}\ \varepsilon$ and 
$\varepsilon : \bZ[G/H]\rightarrow \bZ$ is the augmentation map 
(see \cite[Section 4.8]{Vos98}). 
We have the exact sequence $0\rightarrow \bZ\rightarrow \bZ[G/H]
\rightarrow J_{G/H}\rightarrow 0$ and ${\rm rank}_\bZ J_{G/H}=m-1$. 
Write $J_{G/H}=\oplus_{1\leq i\leq m-1}\bZ x_i$. 
Then the action of $G$ on $L(J_{G/H})=L(x_1,\ldots,x_{m-1})$ is 
of the form 
(\ref{acts}). 

The rationality problem for norm one tori is investigated 
by many authors, see e.g. 
\cite{HY17}, \cite{HHY20}, \cite{HY21} and the references therein. 
Let $C_n$ be the cyclic group of order $n$. 
\begin{theorem}[{Endo and Miyata \cite[Theorem 1.5]{EM75}, Saltman \cite[Theorem 3.14]{Sal84}}]\label{th2-2}
Let $L/k$ be a finite Galois field extension and $G={\rm Gal}(L/k)$. 
Then the following conditions are equivalent:\\
{\rm (i)} $R^{(1)}_{L/k}(\bG_m)$ is retract $k$-rational;\\
{\rm (ii)} all the Sylow subgroups of $G$ are cyclic. 
\end{theorem}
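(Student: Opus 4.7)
The plan is to translate the statement into a question about flabby classes of $G$-lattices via Theorem 2.1(iii), reduce it to Sylow $p$-subgroups by a restriction formula for $J_G$, and then handle the two implications separately.

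By Theorem 2.1(iii) applied to $M=J_G$ (the character lattice of $R^{(1)}_{L/k}(\bG_m)$, sitting in the short exact sequence $0\to\bZ\to\bZ[G]\to J_G\to 0$), condition (i) is equivalent to $[J_G]^{fl}$ being invertible. So the whole theorem reduces to proving that $[J_G]^{fl}$ is invertible if and only if every Sylow subgroup of $G$ is cyclic. The first step would be to establish a restriction identity: for any subgroup $H\leq G$ of index $m=[G:H]$, restrict $0\to\bZ\to\bZ[G]\to J_G\to 0$ to $H$. Using $\bZ[G]|_H\cong\bZ[H]^m$ with $1\in\bZ$ mapping diagonally to $(N_H,\ldots,N_H)$ where $N_H=\sum_{h\in H}h$, an elementary change of basis in the free $\bZ[H]$-module of rank $m$ yields $J_G|_H\cong J_H\oplus\bZ[H]^{m-1}$. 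Since the flabby class and invertibility are preserved under restriction and unchanged by adding permutation summands, $[J_G]^{fl}|_H=[J_H]^{fl}$ in the similarity monoid of $H$-lattices, so invertibility of $[J_G]^{fl}$ descends to $[J_P]^{fl}$ for every Sylow $p$-subgroup $P\leq G$.

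For (ii)$\Rightarrow$(i), the key input is the classical integral-representation result underlying \cite[Theorem 1.5]{EM75}: when every Sylow subgroup of $G$ is cyclic, every flabby $\bZ[G]$-lattice is invertible. Applying this to any flabby representative of $[J_G]^{fl}$ gives invertibility immediately, and then Theorem 2.1(iii) delivers retract $k$-rationality of $R^{(1)}_{L/k}(\bG_m)$.

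For (i)$\Rightarrow$(ii), the restriction formula reduces us to showing that $[J_P]^{fl}$ is non-invertible whenever $P$ is a non-cyclic $p$-group. Using the structural fact that any such $P$ contains $C_p\times C_p$ (the generic case) or $Q_8$ (when $P$ is generalized quaternion at $p=2$) as a subgroup, one further application of the restriction identity reduces to the two base cases $P=C_p\times C_p$ and $P=Q_8$. The main obstacle is the cohomological computation at this point: for each base case I would construct an explicit flabby resolution $0\to J_P\to Q\to F\to 0$ with $Q$ permutation, and exhibit a subgroup $H'\leq P$ with $H^1(H',F)\neq 0$. Since any invertible lattice, being a direct summand of a permutation lattice, has vanishing $H^1$ on all subgroups (permutation lattices $\bZ[G/H'']$ satisfy $H^1(H',\bZ[G/H''])=0$ by Shapiro's lemma), and since $H^1(H',-)$ is thereby a similarity invariant, such a nonzero class witnesses that $[J_P]^{fl}$ is not invertible. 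Carrying out these explicit resolutions and cohomology computations is the heart of the argument.
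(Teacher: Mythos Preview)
The paper does not prove Theorem~\ref{th2-2}; it is quoted in Section~\ref{sePre} as a known result of Endo--Miyata and Saltman, so there is no in-paper argument to compare against. Your reduction via Theorem~\ref{th2-1}(iii) and the restriction identity $J_G|_H\cong J_H\oplus\bZ[H]^{[G:H]-1}$ are correct, and the implication (ii)$\Rightarrow$(i) is fine once you grant the Endo--Miyata input that over a group with cyclic Sylow subgroups every flabby lattice is invertible. The $C_p\times C_p$ base case is also handled by your scheme, since there one does find $H^1(C_p\times C_p,F)\neq 0$.

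The genuine gap is the base case $P=Q_8$. Your proposed obstruction is to exhibit $H'\leq P$ with $H^1(H',F)\neq 0$, but this is impossible for $Q_8$. Every proper subgroup of $Q_8$ is cyclic, and over a cyclic group any flabby lattice is invertible, so $H^1(H',F)=0$ for all $H'\lneq Q_8$. For $H'=Q_8$ itself the long exact sequence of the flabby resolution gives
\[
H^1(Q_8,F)\hookrightarrow H^2(Q_8,J_{Q_8})\cong H^3(Q_8,\bZ)=0,
\]
the last vanishing because $Q_8$ has periodic cohomology of period $4$ and $\widehat H^{-1}(Q_8,\bZ)=0$. Hence $F$ is simultaneously flabby and coflabby, yet $[J_{Q_8}]^{fl}$ is \emph{not} invertible. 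The upshot is that ``flabby $+$ coflabby $\Rightarrow$ invertible'' fails for groups with generalized quaternion Sylow $2$-subgroups, so your $H^1$ test is too coarse to finish (i)$\Rightarrow$(ii). Endo--Miyata's original argument for this case does not proceed through $H^1$ of the flabby class; it uses finer module-theoretic information about $\bZ[Q_8]$-lattices (in effect a direct-summand obstruction not visible in Tate cohomology in degrees $\pm 1$). You will need to replace the $Q_8$ step by such an argument.
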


\begin{theorem}[{Endo and Miyata \cite[Theorem 2.3]{EM75}, Colliot-Th\'{e}l\`{e}ne and Sansuc \cite[Proposition 3]{CTS77}}]\label{th2-3}
Let $L/k$ be a finite Galois field extension and $G={\rm Gal}(L/k)$. 
Then the following conditions are equivalent:\\
{\rm (i)} $R^{(1)}_{L/k}(\bG_m)$ is stably $k$-rational;\\
{\rm (ii)} $G\simeq C_m$ $(m\geq 1)$ or 
$G\simeq C_m\times \langle x,y\mid x^n=y^{2^d}=1,
yxy^{-1}=x^{-1}\rangle$ $(m\geq 1: odd, n\geq 3: odd, d\geq 1)$ 
with ${\rm gcd}\{m,n\}=1$;\\
{\rm (iii)} $G\simeq \langle s,t\mid s^m=t^{2^d}=1, tst^{-1}=s^r\rangle$ 
$(m\geq 1: odd, d\geq 0)$ with $r^2\equiv 1\pmod{m}$;\\
{\rm (iv)} all the Sylow subgroups of $G$ are cyclic and $H^4(G,\bZ)\simeq \widehat H^0(G,\bZ)$ 
where $\widehat H$ is the Tate cohomology.
\end{theorem}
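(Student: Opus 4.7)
The plan is to compute the flabby class $[J_{G/H}]^{fl}$ of the character lattice of $R^{(1)}_{K/k}(\bG_m)$ and then apply Theorem~\ref{th2-1}. After $G$-conjugation I may assume $H=\langle y\rangle$; the case $H=\langle xy\rangle$ follows by an analogous argument. The two parts will be handled by quite different means.

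For (i), write $n=2^a m$ with $a\geq 2$ and $m$ odd. I would reduce to the dihedral $2$-group case of Endo \cite[Theorem 2.1]{End11} by restriction to a Sylow $2$-subgroup. Take $G_2=\langle x^m,y\rangle\cong D_{2^a}$, which contains $H$. A Mackey double-coset computation shows that the $G_2$-orbit of the trivial coset $H\in G/H$ has size $2^a$, while the other $(m-1)/2$ $G_2$-orbits each have size $2^{a+1}$, yielding
\[
\mathrm{Res}^G_{G_2}\bZ[G/H]\ \cong\ \bZ[G_2/H]\oplus \bZ[G_2]^{(m-1)/2}.
\]
Combining this with the standard sequences $0\to\bZ\to\bZ[G/H]\to J_{G/H}\to 0$ and $0\to\bZ\to\bZ[G_2/H]\to J_{G_2/H}\to 0$ via the snake lemma yields
\[
0\ \to\ \bZ[G_2]^{(m-1)/2}\ \to\ \mathrm{Res}^G_{G_2}J_{G/H}\ \to\ J_{G_2/H}\ \to\ 0.
\]
This sequence splits since $\bZ[G_2]$ is cohomologically trivial ($\mathrm{Ext}^1_{G_2}(-,\bZ[G_2])=0$ by Shapiro's lemma), so $[\mathrm{Res}^G_{G_2}J_{G/H}]^{fl}=[J_{G_2/H}]^{fl}$ in the flabby class monoid. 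By Endo's theorem the latter is not invertible (since $2^a\geq 4$ and $H\neq Z(G_2)$), and as restriction preserves invertibility of flabby classes, neither is $[J_{G/H}]^{fl}$. Theorem~\ref{th2-1}(iii) then gives (i).

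For (ii), write $n=2m$ with $m$ odd. The crucial structural input is the direct-product decomposition $G=D_n\cong Z\times D$, valid because $m$ is odd, with $Z=Z(G)=\langle x^m\rangle\cong C_2$ and $D=\langle x^2,y\rangle\cong D_m$. Under this splitting $H=\langle y\rangle\subset D$ so that $\bZ[G/H]=\bZ[Z]\otimes\bZ[D/H]$. The subgroups $A_1,A_2,A_3$ are all of the product form $Z\times(\text{subgroup of }D)$, $B_1=Z$, while only $B_2=\langle xy\rangle$ is a ``diagonal'' $C_2$ that fails to respect the splitting. I will construct the explicit $G$-lattice isomorphism
\[
J_{G/H}\oplus \bZ[G/A_1]\oplus \bZ[G/A_2]\oplus \bZ[G/A_3]\ \cong\ \bZ[G/B_1]\oplus \bZ[G/B_2]\oplus \bZ
\]
by (a) passing to the $D$-lattice level and invoking the odd-dihedral stably permutation identity for $J_{D/H}$ (from Endo \cite[Theorem 3.1]{End11}, made explicit in Section~\ref{seAlt}), and (b) coupling this with an elementary $C_2$-cancellation in the $Z$-direction that ties the ``sign'' contribution to $J_{G/H}$ (the quotient of $\bZ[Z]\otimes\bZ[D/H]$ corresponding to $Z$ acting by $-1$) to the diagonal module $\bZ[G/B_2]$ and the trivial $\bZ$. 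The rank balance $(n-1)+n/2+n/2+2=n+n+1$ serves as a useful consistency check. This shows $[J_{G/H}]^{fl}=0$, and Theorem~\ref{th2-1}(i), (ii) then gives the stable $k$-rationality and the explicit birational equivalence of tori.

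The main obstacle is producing and verifying the explicit isomorphism in Part~(ii). The diagonal subgroup $B_2$ and the sign-twisted component of $J_{G/H}$ do not split off cleanly across the $G=Z\times D$ factorization, and aligning them so that the remaining cancellation is realized by permutation summands requires careful bookkeeping at the level of bases. Once the correct alignment is identified, the verification reduces to the odd-dihedral identity in the $D$-direction plus routine manipulations in the $Z$-direction.
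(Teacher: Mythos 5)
Your proposal does not address the statement at hand: it is a proof sketch for Theorem \ref{mainth} (the non-Galois case $[L:K]=2$ with $G\simeq D_n$, $n$ even), not for Theorem \ref{th2-3}. In Theorem \ref{th2-3} the extension $L/k$ is Galois, so the torus is $R^{(1)}_{L/k}(\bG_m)$ with $H=\{1\}$ and character lattice $J_G$; you instead work throughout with $H=\langle y\rangle\simeq C_2$ and $J_{G/H}$, split into the cases $n\equiv 0$ and $n\equiv 2\pmod 4$, and use the subgroups and rank counts from the statement of Theorem \ref{mainth}. Moreover, Theorem \ref{th2-3} is a classification over \emph{all} finite groups $G$, asserting the equivalence of four conditions, two of which ((iii) and (iv)) are purely group-theoretic and cohomological; a dihedral-specific lattice computation cannot establish (i)$\Leftrightarrow$(ii)$\Leftrightarrow$(iii)$\Leftrightarrow$(iv). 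Even on its own terms the dichotomy you argue for is the wrong one in the Galois setting: for $G\simeq D_n$ with $H=\{1\}$, Theorems \ref{th2-2} and \ref{th2-3} give stable $k$-rationality exactly when $n$ is odd and failure of retract $k$-rationality whenever $n$ is even, not an $n\equiv 0$ versus $n\equiv 2\pmod 4$ split.

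For the record, the paper does not reprove Theorem \ref{th2-3}; it quotes it from Endo--Miyata and Colliot-Th\'el\`ene--Sansuc, and in Section \ref{seAlt} gives only a refinement of the stably rational direction in the special case $G\simeq D_n$ ($n\geq 3$ odd), $H=\{1\}$: one exhibits an explicit exact sequence $0\to C\to \bZ[G]\oplus\bZ[G]\to I_G\to 0$ and checks that $C\oplus\bZ\simeq \bZ[G/\langle xy\rangle]\oplus\bZ[G/\langle x^2y\rangle]\oplus\bZ[G/\langle x\rangle]$ is permutation, whence $[J_G]^{fl}=0$ and Theorem \ref{th2-1} applies. If you wish to contribute a proof of (part of) Theorem \ref{th2-3}, that is the computation to reproduce or improve; the Sylow-restriction argument and the $Z(G)\times D_m$ decomposition you outline belong to the proof of Theorem \ref{mainth} and should be resubmitted there.
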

\begin{theorem}[Endo {\cite[Theorem 2.1]{End11}}]\label{th2-4}
Let $K/k$ be a finite non-Galois, separable field extension 
and $L/k$ be the Galois closure of $K/k$. 
Assume that the Galois group of $L/k$ is nilpotent. 
Then 
$R^{(1)}_{K/k}(\bG_m)$ is not retract $k$-rational.
\end{theorem}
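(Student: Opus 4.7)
By Theorem~\ref{th2-1}(iii), the plan is to show that the flabby class $[J_{G/H}]^{fl}$ is not invertible as a $G$-lattice, where $H={\rm Gal}(L/K)$. Since $K/k$ is non-Galois, $H$ is non-normal in $G$, and as noted in Section~\ref{seInt} the core $\bigcap_{\sigma\in G} H^\sigma$ is trivial.

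The first step is a reduction to the case that $G$ is a $p$-group. Because $G$ is nilpotent, it factors as the internal direct product $G=\prod_p G_p$ of its Sylow subgroups, and every subgroup decomposes accordingly: $H=\prod_p H_p$ with $H_p\le G_p$. Non-normality of $H$ in $G$ forces $H_p$ to be non-normal in $G_p$ for at least one prime $p$. Using $G/H\simeq\prod_q G_q/H_q$ as $G$-sets, one computes
\begin{equation*}
J_{G/H}\big|_{G_p} \;\simeq\; J_{G_p/H_p} \,\oplus\, \bZ[G_p/H_p]^{\,n-1}, \qquad n=\prod_{q\neq p}[G_q:H_q],
\end{equation*}
so $[J_{G/H}|_{G_p}]^{fl}=[J_{G_p/H_p}]^{fl}$, the permutation summand not affecting the flabby class. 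Since restriction to a subgroup preserves both permutation and invertible lattices (and hence commutes with $[\,\cdot\,]^{fl}$ up to permutation summands), invertibility of $[J_{G/H}]^{fl}$ over $G$ would force invertibility of $[J_{G_p/H_p}]^{fl}$ over $G_p$. This reduces the theorem to the case where $G$ is a $p$-group with $H\le G$ non-normal and core-free.

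For the $p$-group case, I would detect a cohomological obstruction to invertibility. Fix a flabby resolution $0\to J_{G/H}\to P \to F \to 0$; invertibility of $[J_{G/H}]^{fl}=[F]$ is equivalent to $F$ being coflabby, i.e.\ $H^1(G',F)=0$ for every subgroup $G'\le G$. From the long exact sequence and $H^1(G',P)=0$ one obtains an injection $H^1(G',F)\hookrightarrow H^2(G',J_{G/H})/\image H^2(G',P)$, and the target admits an explicit description via the sequence $0\to J_{G/H}\to\bZ[G/H]\to\bZ\to 0$ together with Shapiro's lemma applied to the double cosets $G'\backslash G/H$, expressing everything in terms of cohomology of the subgroups $G'\cap\sigma H\sigma^{-1}$. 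A judicious choice of $G'$ witnessing the non-normality of $H$ --- for instance $G'=\langle H,gHg^{-1}\rangle$ for some $g\in G\setminus N_G(H)$, shrinking inductively if necessary so that $H$ remains core-free and non-normal inside $G'$ --- then produces a nonzero class in $H^1(G',F)$, contradicting invertibility.

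The main obstacle lies in this last step: turning the abstract non-normality of $H$ into a concrete nonzero cohomology class. In a $p$-group, normal subgroups are plentiful, so one must exploit the core-freeness of $H$ to guarantee that its non-normality survives restriction to the chosen $G'$. Following Endo, an induction on $|G|$ locates a minimal configuration $(G_0, G_0\cap H)$ in which explicit generators of the offending cohomology group can be written down; elementary abelian sections of $G$ typically serve as the critical test subgroups.
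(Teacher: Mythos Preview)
The paper does not supply its own proof of Theorem~\ref{th2-4}; the result is quoted from Endo \cite[Theorem~2.1]{End11} as background in Section~\ref{sePre}, so there is no in-paper argument to compare your attempt against.

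On the proposal itself: your reduction to the $p$-group case is sound. The decomposition $J_{G/H}\big|_{G_p}\simeq J_{G_p/H_p}\oplus\bZ[G_p/H_p]^{\,n-1}$ is correct (the change of coordinates $(a_1,\ldots,a_n)\mapsto(a_1,a_2-a_1,\ldots,a_n-a_1)$ on $\bZ[G_p/H_p]^n$ is $G_p$-equivariant since $G_p$ acts diagonally on the copies), and restriction to a subgroup does preserve invertibility of flabby classes. Core-freeness also descends: since the $G_q$ with $q\neq p$ centralize $G_p$, the core of $H_p$ in $G_p$ is the $p$-part of the core of $H$ in $G$, hence trivial.

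The gap is the $p$-group step itself, which is the entire content of Endo's theorem. You describe a plausible shape of argument---choose $G'=\langle H,gHg^{-1}\rangle$, shrink inductively, detect a nonzero $H^1(G',F)$---but you neither verify that non-normality and core-freeness survive your shrinking procedure, nor produce the actual cohomology class, and you explicitly flag this as ``the main obstacle.'' That is not a proof but a wish list. In Endo's argument this step is handled by a careful structural reduction inside the $p$-group (using normalizer chains and quotienting by cores) down to an explicit minimal configuration where the non-invertibility can be checked directly; an appeal to ``elementary abelian sections typically serve as the critical test subgroups'' does not substitute for that work. As it stands, your write-up proves only the (easy) reduction and leaves the theorem itself unproved.
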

\begin{theorem}[Endo {\cite[Theorem 3.1]{End11}}]\label{th2-5}
Let $K/k$ be a finite non-Galois, separable field extension 
and $L/k$ be the Galois closure of $K/k$. 
Let $G={\rm Gal}(L/k)$ and $H={\rm Gal}(L/K)\leq G$. 
Assume that all the Sylow subgroups of $G$ are cyclic. 
Then 
$R^{(1)}_{K/k}(\bG_m)$ is retract $k$-rational, 
and the following conditions are equivalent:\\
{\rm (i)} 
$R^{(1)}_{K/k}(\bG_m)$ is stably $k$-rational;\\
{\rm (ii)} 
$G\simeq C_m\times D_n$ $(m\geq 1: odd, n\geq 3: odd)$ 
with ${\rm gcd}\{m,n\}=1$ and $H\simeq C_2$;\\
{\rm (iii)} 
$H\simeq C_2$ and $G\simeq C_r\rtimes H$ $(r\geq 3: odd)$ 
where $H$ acts non-trivially on $C_r$. 
\end{theorem}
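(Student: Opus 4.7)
The plan is to control the Chevalley $G$-lattice $J_{G/H}$ through its flabby class, since by Theorem~\ref{th2-1} the retract (resp.\ stable) $k$-rationality of $R^{(1)}_{K/k}(\bG_m)$ is equivalent to invertibility (resp.\ triviality) of $[J_{G/H}]^{fl}$. The central technical input is the short exact sequence of $G$-lattices
\[
0 \longrightarrow J_{G/H} \longrightarrow J_G \longrightarrow \mathrm{Ind}_H^G\, J_H \longrightarrow 0,
\]
obtained by dualising the natural surjection $I_G \twoheadrightarrow I_{G/H}$ (whose kernel is $\mathrm{Ind}_H^G\, I_H$) and using self-duality of induction. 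Combined with the additivity of flabby classes along short exact sequences, this lets one transfer information between $[J_G]^{fl}$, $[J_H]^{fl}$, and $[J_{G/H}]^{fl}$.

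Retract $k$-rationality is obtained quickly. Every subgroup of a $Z$-group is again a $Z$-group, so Theorem~\ref{th2-2} applied to $L/k$ and $L/K$ gives that both $[J_G]^{fl}$ and $[J_H]^{fl}$ are invertible as $G$- and $H$-classes respectively. Induction preserves flabbiness and invertibility (standard consequences of Shapiro and Mackey), so $[\mathrm{Ind}_H^G J_H]^{fl}$ is $G$-invertible, and additivity along the displayed sequence forces $[J_{G/H}]^{fl}$ to be invertible. Theorem~\ref{th2-1}(iii) then yields retract rationality.

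The equivalence (ii) $\Leftrightarrow$ (iii) is a routine group-theoretic reformulation: in (ii), set $r=mn$; the product of generators of $C_m$ and $C_n$ generates a cyclic subgroup $C_r$, while the reflection in $D_n$ centralises $C_m$ and inverts $C_n$, inducing a nontrivial order-$2$ automorphism of $C_r$, which realises $G\cong C_r\rtimes C_2$ with $H\cong C_2$ a non-central complement. Conversely, any nontrivial involution of $C_r$ ($r$ odd) splits $r=mn$ via its eigenspace decomposition on each prime factor, recovering (ii). For (iii) $\Rightarrow$ (i), the group $G=C_r\rtimes C_2$ admits a presentation matching Theorem~\ref{th2-3}(iii) with $m=r$ odd and $d=1$, so $[J_G]^{fl}=0$; since $H\cong C_2$ is cyclic, $[J_H]^{fl}=0$ as well, and induction preserves vanishing of the flabby class, so $[\mathrm{Ind}_H^G J_H]^{fl}=0$. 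Additivity along the displayed sequence then forces $[J_{G/H}]^{fl}=0$, so $R^{(1)}_{K/k}(\bG_m)$ is stably $k$-rational by Theorem~\ref{th2-1}(i). Just as in Theorem~\ref{mainth}, this conclusion can be made explicit by writing down a stable isomorphism of products of Weil-restricted tori over the fixed subfields $L^{\langle s\rangle}$, $L^H$, etc.

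The main obstacle is the converse (i) $\Rightarrow$ (iii). Assuming $[J_{G/H}]^{fl}=0$, Mackey decomposition of $\bZ[G/H]|_{G'}$ for any intermediate subgroup $G'\supseteq H$ shows that $J_{G/H}|_{G'}$ differs from $J_{G'/H}$ by a permutation $G'$-lattice; since restriction preserves stable permutation, $[J_{G'/H}]^{fl}=0$ as well. Specialising to $G'=C_m\rtimes H$, where $C_m$ is the maximal normal cyclic subgroup of $G$ and $H$ acts faithfully on $C_m$ (thanks to the trivial-core condition), reduces the problem to a smaller $Z$-group. Using the $Z$-group presentation $G=\langle s,t\mid s^m=t^n=1,\ tst^{-1}=s^r\rangle$ together with Theorem~\ref{th2-3} applied to the relevant Galois subextensions to constrain which subquotients can have stably permutation Chevalley lattice, one rules out every case other than $n=2$ and $|H|=2$. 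The delicate part is the combinatorial bookkeeping of possible $Z$-group subgroups versus the Tate-cohomological obstructions (e.g.\ $\widehat H^{-1}(P,J_{G/H})=0$ on cyclic subgroups $P$) supplied by the vanishing of the flabby class.
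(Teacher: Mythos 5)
The statement you are proving is quoted in the paper from Endo \cite[Theorem 3.1]{End11}; the paper itself supplies no proof of it beyond the citation, except that Section \ref{seAlt} re-proves the implication (iii) $\Rightarrow$ (i) in the special case $G\simeq D_n$ ($n$ odd) by exhibiting an explicit flabby resolution of $J_{G/H}$ with stably permutation flabby part. Your short exact sequence $0\to J_{G/H}\to J_G\to \mathrm{Ind}_H^G J_H\to 0$ is correct (it is the dual of $0\to \mathrm{Ind}_H^G I_H\to I_G\to I_{G/H}\to 0$), but the step that carries your entire argument --- ``additivity of flabby classes along short exact sequences'' --- is false as a general principle. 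Applied to the sequence $0\to\bZ\to\bZ[G/H]\to J_{G/H}\to 0$ recalled in Section \ref{sePre}, it would give $0=[\bZ[G/H]]^{fl}=[\bZ]^{fl}+[J_{G/H}]^{fl}$, i.e.\ $[J_{G/H}]^{fl}=0$ for \emph{every} $G$ and $H$, contradicting Theorem \ref{th2-2} and Theorem \ref{mainth}(i). The correct lemmas of this shape require one of the outer terms to be permutation, invertible, or at least coflabby, and your third term satisfies none of these: for $H\simeq C_2$ one has $\mathrm{Ind}_H^G J_H=\mathrm{Ind}_H^G\bZ^{-}$, and by Mackey $H^1(H,\mathrm{Ind}_H^G\bZ^{-})$ contains the direct summand $H^1(C_2,\bZ^{-})\simeq\bZ/2\bZ\neq 0$. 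Consequently both your proof of retract $k$-rationality and your proof of (iii) $\Rightarrow$ (i) collapse. Retract rationality should instead be obtained by restricting $[J_{G/H}]^{fl}$ to the (cyclic) Sylow subgroups and using the local characterization of invertibility together with the fact that every lattice over a cyclic group has invertible flabby class (cf.\ \cite[Remarque R2]{CTS77}, \cite[Lemma 2.17]{HY17}); the implication (iii) $\Rightarrow$ (i) needs an actual flabby resolution, as in the paper's appendix, or Endo's induction on the number of prime divisors.

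For the converse (i) $\Rightarrow$ (iii), which is the genuinely difficult direction, your reduction $[J_{G/H}|_{G'}]^{fl}=[J_{G'/H}]^{fl}$ for $H\leq G'\leq G$ is sound (the extra Mackey orbits contribute a permutation sublattice that can be stripped off), but the sentence ``one rules out every case other than $n=2$ and $|H|=2$'' is an assertion of the theorem's content rather than an argument: nothing in the proposal produces the cohomological or class-group obstruction showing that $[J_{G/H}]^{fl}\neq 0$ when $|H|>2$ or when $G$ is not of the form $C_r\rtimes H$. Only the equivalence (ii) $\Leftrightarrow$ (iii), which is elementary group theory, is actually established by your argument.
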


\section{Proof of Theorem {\ref{mainth}}}\label{seProof}

Let $K/k$ be a finite non-Galois separable field extension 
and $L/k$ be the Galois closure of $K/k$. 
Let $D_n$ be the dihedral group of order $2n$ $(n\geq 3)$ 
and $Z(D_n)$ be the center of $D_n$. 
Assume that $G={\rm Gal}(L/k)\simeq D_n
=\langle x,y\mid x^n=y^2=1,y^{-1}xy=x^{-1}\rangle$ $(n\geq 4:$ even$)$ 
and $H={\rm Gal}(L/K)\leq G$. 
Then it follows from the condition 
$\bigcap_{\sigma\in G} H^\sigma=\{1\}$ 
that $|H|=2$ with $H\neq Z(D_n)$. 
We prepare the following lemma: 
\begin{lemma}\label{lem3.1}
Let $K/k$ be a finite separable field extension 
and $L/k$ be the Galois closure of $K/k$. 
Let $G={\rm Gal}(L/k)$ and $H={\rm Gal}(L/K)\leq G$. 
Let ${\rm Aut}(G)$ be the group of automorphisms of $G$ 
and $K^\varphi=L^{\varphi(H)}$ $(\varphi\in{\rm Aut}(G))$. 
Then 
$R^{(1)}_{K/k}(\bG_m)$ is stably $k$-rational 
$($resp. retract $k$-rational$)$ 
if and only if 
$R^{(1)}_{K^\varphi/k}(\bG_m)$ is stably $k$-rational 
$($resp. retract $k$-rational$)$ for any $\varphi\in{\rm Aut}(G)$. 
\end{lemma}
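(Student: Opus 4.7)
The plan is to reduce the assertion, via the standard duality between norm one tori and Chevalley modules, to a statement about flabby classes of $G$-lattices. Recall that $R^{(1)}_{K/k}(\bG_m)$ has character module $J_{G/H}$ and function field $L(J_{G/H})^G$, while $R^{(1)}_{K^\varphi/k}(\bG_m)$ has character module $J_{G/\varphi(H)}$ and function field $L(J_{G/\varphi(H)})^G$. By Theorem \ref{th2-1}(i),(iii), the stable (resp.\ retract) $k$-rationality of these tori is equivalent to $[J_{G/H}]^{fl}=0$ (resp.\ $[J_{G/H}]^{fl}$ invertible), and similarly for $J_{G/\varphi(H)}$. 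So it suffices to show that $[J_{G/H}]^{fl}$ and $[J_{G/\varphi(H)}]^{fl}$ are simultaneously zero, and simultaneously invertible.

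To compare the two flabby classes, I would introduce the twist functor on $G$-lattices: for a $G$-lattice $M$ and $\varphi\in{\rm Aut}(G)$, let $M^\varphi$ denote the same underlying abelian group equipped with the new action $g\cdot_\varphi m:=\varphi(g)\,m$. A direct check exhibits the $G$-module isomorphism $\bZ[G/K]^\varphi\cong\bZ[G/\varphi^{-1}(K)]$ for every subgroup $K\leq G$ (send $aK$ to $\varphi^{-1}(a)\varphi^{-1}(K)$). Applying the clearly exact twist functor to the defining sequence $0\to\bZ\to\bZ[G/H]\to J_{G/H}\to 0$ then yields $J_{G/H}^\varphi\cong J_{G/\varphi^{-1}(H)}$; equivalently, $J_{G/\varphi(H)}\cong J_{G/H}^{\varphi^{-1}}$.

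The key point, and the only real thing to verify, is that $(-)^\varphi$ preserves the two lattice-theoretic conditions relevant to Theorem \ref{th2-1}. That permutation lattices go to permutation lattices is immediate from the formula just displayed. For flabbiness, observe that $M^\varphi$ restricted to $K\leq G$ coincides, as an abelian group, with $M$ restricted to $\varphi(K)$, with the $K$-action transported through the isomorphism $\varphi|_K:K\xrightarrow{\sim}\varphi(K)$; this gives a natural identification $\widehat H^n(K,M^\varphi)\cong\widehat H^n(\varphi(K),M)$. Since $\varphi$ permutes the subgroups of $G$, the universally quantified condition ``$\widehat H^{-1}(K,-)=0$ for all $K\leq G$'' defining flabbiness is preserved. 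Consequently, twisting a flabby resolution $0\to M\to P\to F\to 0$ of $M$ gives a flabby resolution $0\to M^\varphi\to P^\varphi\to F^\varphi\to 0$ of $M^\varphi$, so $[M^\varphi]^{fl}=[F^\varphi]$; and since the twist also respects direct summands, the property $[M]^{fl}=0$ and the property ``$[M]^{fl}$ is invertible'' each transfer from $M$ to $M^\varphi$ and vice versa. Specialising to $M=J_{G/H}$ with the automorphism $\varphi^{-1}$, and invoking Theorem \ref{th2-1} once more, finishes the proof. The main obstacle is really just the careful verification that $(-)^\varphi$ is an auto-equivalence of $G$-lattices that respects permutation-ness and flabbiness — essentially a bookkeeping exercise about how $\text{Aut}(G)$ acts on subgroups.
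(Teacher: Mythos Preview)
Your argument is correct and is essentially a careful unpacking of the paper's one-line proof, which simply says ``We should just consider $L^{\varphi(H)}=K^\varphi=k(\theta^\prime_i)$ instead of $L^H=K=k(\theta_i)$.'' The paper appeals tacitly to the symmetry furnished by $\varphi$; you make this explicit by introducing the twist functor $(-)^{\varphi}$ on $G$-lattices, checking that it sends $J_{G/H}$ to $J_{G/\varphi^{-1}(H)}$, and verifying that permutation-ness and flabbiness (hence the vanishing or invertibility of the flabby class) are preserved, so that Theorem~\ref{th2-1} applies symmetrically --- exactly the content the paper leaves implicit.
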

\begin{proof}
We should just consider $L^{\varphi(H)}=K^\varphi=k(\theta^\prime_i)$ 
instead of $L^H=K=k(\theta_i)$. 
\end{proof}

{\it Proof of Theorem \ref{mainth}.} 
Assume that 
$G\simeq D_n=\langle x,y\mid x^n=y^2=1,y^{-1}xy=x^{-1}\rangle$ and 
$n=2m$ $(m\geq 2)$. 
The problem is determined up to conjugacy in $G$ 
because $J_{G/H}\simeq J_{G/H^\sigma}$ $(\sigma\in G)$ as $G$-lattices 
(conjugate just corresponds to base change). 
Then we see that $H$ is conjugate to 
$\langle y\rangle$ or $\langle xy\rangle$ in $G$. 
The group ${\rm Aut}(G)$ permutes 
$\langle y\rangle$ and $\langle xy\rangle$. 
Hence, by Lemma \ref{lem3.1}, we may assume that 
$H\simeq C_2=\langle xy\rangle$. 

We may assume that 
$G\simeq D_n=\langle x,y\mid x^n=y^2=1,y^{-1}xy=x^{-1}\rangle\leq S_n$ 
where $x=(1\ 2\ \cdots\ n)$, $y=(1\ n)(2\ n-1)\cdots (m\ m+1)$, 
$xy=(2\ n)(3\ n-1)\cdots (m\ m+2)$ 
with $m=n/2$.\\

{\rm (i)} Case $n\equiv 0\ ({\rm mod}\ 4)$. 
Write $n=2m$ $(m\geq 2:$ even$)$. 
We take $G=\langle x,y\mid x^n=y^2=1,y^{-1}xy=x^{-1}\rangle\simeq D_n$ and 
$H=\langle xy\rangle\simeq C_2$. 
We consider $[J_{G/H}]^{fl}|_{G^\prime}$ which is obtained by 
the restriction of the action of $G$ on $[J_{G/H}]^{fl}$ to the subgroup 
$G^\prime=\langle x^2,y\rangle\simeq D_m$ with $H^\prime=H\cap G^\prime=1$. 
Then $G^\prime$ becomes a transitive subgroup of $S_n$ and 
$[J_{G/H}]^{fl}|_{G^\prime}=[J_{G^\prime}]^{fl}$ is not invertible by 
Theorem \ref{th2-1} and Theorem \ref{th2-2}. 
This implies that $[J_{G/H}]^{fl}$ is not invertible 
(see \cite[Remarque R2]{CTS77}, \cite[Lemma 2.17]{HY17}) 
and hence $R^{(1)}_{K/k}(\bG_m)$ is not retract $k$-rational 
(by Theorem \ref{th2-1}).\\

{\rm (ii)} Case $n\equiv 2\ ({\rm mod}\ 4)$. 
Write $n=2m$ $(m\geq 3:$ odd$)$. 
We take $G=\langle x,y\mid x^n=y^2=1,y^{-1}xy=x^{-1}\rangle\simeq D_n$ and 
$H=\langle xy\rangle\simeq C_2$. 
We have the exact sequence 
\begin{align*}
0\to I_{G/H}\to \bZ[G/H]\xrightarrow{\varepsilon} \bZ\to 0
\end{align*}
where $\varepsilon$ is the augmentation map. 
We will construct an exact sequence of $G$-lattices 
\begin{align*}
0\to C\to P\to I_{G/H}\to 0
\end{align*}
with $P$ permutation and $C$ stably permutation, i.e. $[C]=0$. 
In particular, $C$ and the dual $(C)^\circ={\rm Hom}_\bZ(C,\bZ)$ 
are invertible. 
Then we get a flabby resolution 
\begin{align*}
0\to J_{G/H}\to (P)^\circ\to (C)^\circ\to 0
\end{align*}
of $J_{G/H}$. 
This implies that $[J_{G/H}]^{fl}=[(C)^\circ]=0$ and hence 
$R^{(1)}_{K/k}(\bG_m)$ is stably $k$-rational (by Theorem \ref{th2-1}).\\

Let $e_1,\ldots,e_n$ be the standard $\bZ$-basis of $\bZ[G/H]$ 
via the left regular representation of $G/H$: 
\begin{align*}
x&:e_1\mapsto\cdots\mapsto e_n\mapsto e_1,\\
y&:e_i\leftrightarrow e_{n+1-i}\ (1\leq i\leq m).
\end{align*}
Define $f_i:=e_i-e_{i+1}$ $(1\leq i\leq n-1)$ and $f_n:=e_n-e_1$. 
Then $f_1,\ldots,f_{n-1}$ becomes a $\bZ$-basis of $I_{G/H}$ 
with $\sum_{i=1}^n f_i=0$, i.e. $f_n=-\sum_{i=1}^{n-1}f_i$: 
\begin{align*}
x&:f_1\mapsto\cdots\mapsto f_{n-1}\mapsto -\sum_{i=1}^{n-1}f_i,\\
y&:f_i\leftrightarrow -f_{n-i}\ (1\leq i\leq m).
\end{align*}
We consider a permutation $G$-lattice 
$P\simeq\bZ[G/Z(G)]\oplus\bZ[G/H]\simeq \bZ[G/\langle x^m\rangle]\oplus\bZ[G/\langle xy\rangle]\simeq \bZ[D_m]\oplus\bZ[D_n/C_2]$ 
$(n=2m)$ where $H={\rm Stab}_{1}(G)$ 
with $\bZ$-basis 
$\alpha_1,\ldots,\alpha_m$, 
$\beta_1,\ldots,\beta_m$, 
$\gamma_1,\ldots,\gamma_n$ on which $G$ acts by 
\begin{align*}
x&:\alpha_1\mapsto\cdots\mapsto\alpha_m\mapsto\alpha_1, 
\beta_1\mapsto\cdots\mapsto\beta_m\mapsto\beta_1, 
\gamma_1\mapsto\cdots\mapsto\gamma_n\mapsto\gamma_1,\\
y&:\alpha_i\leftrightarrow\beta_{m-i}\ (1\leq i\leq m-1), 
\alpha_m\leftrightarrow\beta_m, 
\gamma_j\leftrightarrow\gamma_{n+1-j}\ (1\leq j\leq n). 
\end{align*}
Note that ${\rm rank}_\bZ P=2m+n=2n$. 

We define a $G$-homomorphism 
\begin{align*}
\varphi: P\rightarrow I_{G/H},\ 
&\alpha_i\mapsto f_i+f_{m+i},\ 
\beta_i\mapsto -(f_i+f_{m+i})\ (1\leq i\leq m),\\ 
&\gamma_j\mapsto x^{j-1}\left(\sum_{l=1}^{m-1}f_l+f_{m+1}\right)\ 
(1\leq j\leq n). 
\end{align*}
We claim that $\varphi$ is surjective. 
Indeed, we see that 
$\sum_{l=2}^{m-1}f_l=\varphi(\gamma_1)-\varphi(\alpha_1)$,  
$f_m+f_{m+2}=-\varphi(\gamma_1)+\varphi(\alpha_1)+\varphi(\gamma_2)$, 
$f_i+f_{i+2}=x^{i-m}(f_m+f_{m+2})\in {\rm Image}(\varphi)$ 
and $\sum_{l=i}^{i+3}f_l\in {\rm Image}(\varphi)$. 
Thus it follows from 
$\sum_{l=2}^{m-1}f_l, \sum_{l=i}^{i+3}f_l\in {\rm Image}(\varphi)$ 
that $f_1$ (resp. $f_2$) $\in {\rm Image}(\varphi)$ 
when $m\equiv 1$ $({\rm mod}\ 4)$ 
(resp. $m\equiv 3$ $({\rm mod}\ 4)$) 
and hence $f_i=x^{i-1}(f_1)$ (resp. $f_i=x^{i-2}(f_2)$) 
$\in {\rm Image}(\varphi)$. 
This implies that $\varphi$ is surjective. 

We get an exact sequence of $G$-lattices 
\begin{align*}
0\to C\to P\xrightarrow{\varphi} I_{G/H}\to 0
\end{align*}
where $C={\rm Ker}(\varphi)$ with ${\rm rank}_\bZ C=n+1=2m+1$. 
We find a $\bZ$-basis 
$a_i=\alpha_i+\beta_i$, 
$b_i=x^{i-1}(\alpha_1+\beta_m-\gamma_1-\gamma_{m+1})$ 
$(1\leq i\leq m)$, $c_1=\sum_{i=1}^m\alpha_i$ 
of $C$ 
and the action of $G$ on 
$C=\langle a_1,\ldots,a_m,b_1,\ldots,b_m,c_1\rangle_\bZ$ is given by 
\begin{align*}
x&:a_1\mapsto\cdots\mapsto a_m\mapsto a_1, 
b_1\mapsto\cdots\mapsto b_m\mapsto b_1, 
c_1\mapsto c_1,\\
y&:a_i\leftrightarrow a_{m-i}, 
b_i\leftrightarrow b_{m+1-i}\ (1\leq i\leq \tfrac{m-1}{2}), 
a_m\mapsto a_m, 
b_{\frac{m+1}{2}}\mapsto b_{\frac{m+1}{2}}, 
c_1\mapsto \sum_{i=1}^m a_i-c_1. 
\end{align*}
We remark that the action of $G$ on $C$ is not faithful. 
Indeed, the subgroup $Z(G)=\langle x^m\rangle\simeq C_2$ acts on $C$ trivially 
and hence we can regard $C$ as a $D_m$-lattice where $D_m=G/Z(G)=G/\langle x^m\rangle$ $(n=2m)$. 

Take an element $c_2=\sum_{i=1}^m\beta_i\in P$. 
Then we have 
\begin{align*}
x&: c_1\mapsto c_1, c_2\mapsto c_2,\\
y&: c_1\mapsto c_2, c_2\mapsto c_1. 
\end{align*}
Then we consider the trivial $G$-lattice 
$\langle z_0\rangle_\bZ\simeq\bZ$ and extend the map $\varphi$ from $P$ 
to $P\oplus \langle z_0\rangle_\bZ$ by $\widetilde{\varphi}(z_0)=0$: 
\begin{align*}
0\to C\oplus\langle z_0\rangle_\bZ\to P\oplus\langle z_0\rangle_\bZ
\xrightarrow{\widetilde{\varphi}} I_{G/H}\to 0
\end{align*}
where $C\oplus\langle z_0\rangle_\bZ={\rm Ker}(\widetilde{\varphi})$ 
with ${\rm rank}_\bZ (C\oplus\langle z_0\rangle_\bZ)=(n+1)+1=2m+2$. 

Because ${\rm gcd}\{2,m\}=1$, there exist $u,v\in\bZ$ such that 
$2u+mv=1$. 
Then we can get a $\bZ$-basis 
$a_i^\prime:=a_i+vz_0$, 
$b_i$ $(1\leq i\leq m)$, 
$c_l^\prime:=c_l-uz_0$ $(1\leq l\leq 2)$ 
of $C\oplus\langle z_0\rangle_\bZ\simeq 
\bZ[D_m/\langle x^2y\rangle]\oplus \bZ[D_m/\langle xy\rangle]\oplus 
\bZ[D_m/\langle x\rangle]$ 
with $\bZ$-rank $(2m+1)+1=m+m+2$ 
on which $D_m= G/\langle x^m\rangle$ acts by 
\begin{align*}
x&:a_1^\prime\mapsto\cdots\mapsto a_m^\prime\mapsto a_1^\prime, 
b_1\mapsto\cdots\mapsto b_m\mapsto b_1, 
c_1^\prime\mapsto c_1^\prime, c_2^\prime\mapsto c_2^\prime,\\
y&:a_i^\prime\leftrightarrow a_{m-i}^\prime, 
b_i\leftrightarrow b_{m+1-i}\ (1\leq i\leq \tfrac{m-1}{2}), 
a_m^\prime\mapsto a_m^\prime, 
b_{\frac{m+1}{2}}\mapsto b_{\frac{m+1}{2}}, 
c_1^\prime\leftrightarrow c_2^\prime.
\end{align*}
Indeed, we can confirm that 
\begin{align*}
&\ \ \ \begin{array}{ccc|cc}
\multicolumn{3}{c}{\overbrace{\hspace{15mm}}^{m}} & 
\multicolumn{2}{c}{\,\overbrace{\hspace{12mm}}^{2}}
\end{array}\\
\det&\left(
\begin{array}{ccc|cc}
 1 &  &  & 0 & v \\
  & \ddots &  & \vdots & \vdots \\
  & & 1 & 0 & v \\\hline
 0 & \cdots & 0 & 1 & -u \\
 1 & \cdots & 1 & -1 & -u \\
\end{array}
\right)=-2u-mv=-1.
\end{align*}
Then we find that $C\oplus\langle z_0\rangle_\bZ\simeq 
\bZ[D_m/\langle x^2y\rangle]\oplus \bZ[D_m/\langle xy\rangle]\oplus 
\bZ[D_m/\langle x\rangle]$ 
is a permutation $G$-lattice ($D_m$-lattice) and hence $[C]=0$. 

The last statement follows from the exact sequence 
\begin{align*}
0\to J_{G/H}\to (P)^\circ\oplus\bZ\to (C)^\circ\oplus\bZ\to 0
\end{align*}
with $(P)^\circ\oplus\bZ\simeq 
\bZ[G/\langle x^m\rangle]\oplus\bZ[G/\langle xy\rangle]\oplus\bZ$, 
$(C^\circ)\oplus\bZ\simeq 
\bZ[D_m/\langle x^2y\rangle]\oplus \bZ[D_m/\langle xy\rangle]\oplus 
\bZ[D_m/\langle x\rangle]$ 
because 
it follows from 
Endo and Miyata 
\cite[Proposition 1.10]{EM73} and Lenstra \cite[Proposition 1.5]{Len74} 
(see also Ono \cite[Proposition 1.2.2]{Ono63}, 
Swan \cite[Lemma 3.1]{Swa10}, Hoshi, Kang and Kitayama \cite[Proof of Theorem 6.5]{HKK14}) that 
$L(J_{G/H}\oplus (C)^\circ\oplus\bZ)^G\simeq 
L((P)^\circ\oplus\bZ)^G$.\qed\\

As a consequence of the proof of Theorem \ref{mainth} above, we get: 
\begin{corollary}\label{cor3.2}
Let $C={\rm Ker(\varphi)}$ be a $D_m$-lattice 
with ${\rm rank}_\bZ C=2m+1$ $(m\geq 3: odd)$ 
given as in {\rm (ii)} Case $n\equiv 2\ ({\rm mod}\ 4)$ in 
the proof of Theorem \ref{mainth}. 
Then we have $\widehat{H}^0(D_m,C)$ 
$\simeq\bZ/2\bZ$ 
where $\widehat{H}^0(G,M)=M^G/\{\sum_{g\in G} g\cdot m\mid m\in M\}$ 
is the 0th Tate cohomology. 
In particular, the $D_m$-lattice $C$ is not permutation but stably permutation which satisfies $C\oplus\bZ\simeq 
\bZ[D_m/\langle x^2y\rangle]\oplus \bZ[D_m/\langle xy\rangle]\oplus 
\bZ[D_m/\langle x\rangle]$ with $\bZ$-rank $(2m+1)+1=m+m+2$.  
\end{corollary}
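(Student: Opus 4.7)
The stably permutation decomposition $C \oplus \bZ \simeq \bZ[D_m/\langle x^2y\rangle] \oplus \bZ[D_m/\langle xy\rangle] \oplus \bZ[D_m/\langle x\rangle]$ is precisely the identification produced inside the proof of Theorem \ref{mainth} (with $z_0$ playing the role of $\bZ$), so the only genuine content of the corollary is the cohomology computation $\widehat{H}^0(D_m, C) \simeq \bZ/2\bZ$ and the deduction that $C$ itself is not permutation. The plan is to compute $C^{D_m}$ and the norm image $N \cdot C$ separately, working directly from the explicit $D_m$-action on the basis $a_1, \dots, a_m, b_1, \dots, b_m, c_1$ recorded in that proof.

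Set $A := \sum_{i=1}^m a_i$ and $B := \sum_{i=1}^m b_i$. Since $x$ cyclically permutes the $a_i$'s, cyclically permutes the $b_i$'s, and fixes $c_1$, the $x$-invariant sublattice of $C$ is $\bZ A \oplus \bZ B \oplus \bZ c_1$. Imposing $y$-invariance on top: $y$ merely permutes the $a_i$'s and $b_i$'s among themselves, so $yA = A$ and $yB = B$, whereas $yc_1 = A - c_1$ forces the coefficient of $c_1$ in any $y$-fixed combination to vanish. Hence $C^{D_m} = \bZ A \oplus \bZ B$. For the norm, writing $N = \bigl(\sum_{j=0}^{m-1} x^j\bigr)(1+y) \in \bZ[D_m]$ and using that $Nc$ is already $D_m$-invariant for every $c \in C$, it suffices to compute $N$ on one representative of each $D_m$-orbit in the basis. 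A short calculation gives $Na_1 = 2A$, $Nb_1 = 2B$, and $Nc_1 = mA$, so $N \cdot C = \bZ \cdot 2A + \bZ \cdot 2B + \bZ \cdot mA$. Since $\gcd(2, m) = 1$, the $A$-component collapses to $\bZ A$, leaving $N \cdot C = \bZ A \oplus 2\bZ B$, and therefore $\widehat{H}^0(D_m, C) = (\bZ A \oplus \bZ B)/(\bZ A \oplus 2\bZ B) \simeq \bZ/2\bZ$.

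The non-permutation assertion then follows by a quick rank-parity argument: if $C \simeq \bigoplus_i \bZ[D_m/H_i]$ were permutation, Shapiro's lemma would give $\widehat{H}^0(D_m, C) \simeq \bigoplus_i \bZ/|H_i|\bZ$, and matching this against $\bZ/2\bZ$ forces exactly one $H_i$ of order $2$ with the remaining $H_i$ trivial; but then $\mathrm{rank}_\bZ C \in \{m, 3m, 5m, \dots\}$, none of which equals $2m+1$ for $m \geq 3$, a contradiction. There is no real obstacle in the proof — the calculation is mechanical once the basis is in hand — and the only substantive use of the hypothesis that $m$ is odd is the $\gcd(2, m) = 1$ collapse of the norm image, which is precisely what allows a $\bZ/2$ to survive in the quotient.
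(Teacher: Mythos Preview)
Your proof is correct, but it takes a different route from the paper's for the cohomology computation. The paper does not compute $C^{D_m}$ and $N\cdot C$ directly; instead it applies Shapiro's lemma to the already-established decomposition $C\oplus\bZ\simeq\bZ[D_m/\langle x^2y\rangle]\oplus\bZ[D_m/\langle xy\rangle]\oplus\bZ[D_m/\langle x\rangle]$ to get $\widehat{H}^0(D_m,C\oplus\bZ)\simeq\bZ/2\bZ\oplus\bZ/2\bZ\oplus\bZ/m\bZ$, then subtracts off $\widehat{H}^0(D_m,\bZ)\simeq\bZ/2m\bZ\simeq\bZ/2\bZ\oplus\bZ/m\bZ$ to isolate $\widehat{H}^0(D_m,C)\simeq\bZ/2\bZ$. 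Your direct computation is more elementary and self-contained---it does not need the stably-permutation identification as an input, only the explicit action on the basis---whereas the paper's argument is shorter and shows how the answer is forced by the decomposition itself. The non-permutation argument via Shapiro and the rank-divisibility contradiction is essentially identical in both; the paper phrases it as ``the $\bZ$-rank of $C$ is a multiple of $m$,'' which is exactly your observation that $\mathrm{rank}_\bZ C\in\{m,3m,5m,\dots\}$.
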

\begin{proof}
Recall that we can regard $D_n$-lattice $C$ as $D_m$-lattice via 
$D_m=D_n/Z(D_n)=D_n/\langle x^m\rangle$. 
By Shapiro's lemma, we have $\widehat{H}^0(D_m,\bZ[D_m/H])
\simeq \widehat{H}^0(H,\bZ)\simeq \bZ/|H|\bZ$ for any subgroup $H\leq D_m$.  
Hence it follows from the isomorphism $C\oplus\bZ\simeq 
\bZ[D_m/\langle x^2y\rangle]\oplus \bZ[D_m/\langle xy\rangle]\oplus 
\bZ[D_m/\langle x\rangle]$ given in the proof of Theorem \ref{mainth} 
that $\widehat{H}^0(D_m,C\oplus\bZ)\simeq \bZ/2\bZ\oplus\bZ/2\bZ\oplus\bZ/m\bZ$.
Because $\widehat{H}^0(D_m,\bZ)\simeq \bZ/2m\bZ\simeq\bZ/2\bZ\oplus\bZ/m\bZ$ 
($m:$ odd), we get that $\widehat{H}^0(D_m,C)\simeq \bZ/2\bZ$. 
Suppose that $C=\oplus_{i=1}^r\bZ[D_m/H_i]$ is permutation. 
Then $\widehat{H}^0(D_m,C)\simeq\oplus_{i=1}^r\bZ/|H_i|\bZ$. 
This implies that the $\bZ$-rank of $C$ is a multiple of $m$ because $|H_i|=1$ or $2$. 
Contradiction. 
\end{proof}

\section{Appendix: refinements of the proof of stably rational cases with $G\simeq D_n$ $(n\geq 3: odd)$}\label{seAlt}

We give refinements of the proof 
of stably rational cases with 
$G\simeq D_n$ $(n\geq 3:$ odd$)$ of Endo and Miyata's theorems 
\cite[Theorem 2.3]{EM75} with $H=\{1\}$ and 
\cite[Theorem 3.1]{End11} with $H\simeq C_2$, 
using a technique similar to that given in the proof 
of Theorem \ref{mainth}. 
We remark that the original proofs given as in 
$(1)\Rightarrow (2)$ of \cite[Theorem 2.3]{EM75} 
and $(1)\Rightarrow (2)$ of \cite[Theorem 3.1]{End11} 
use an induction argument on the number of prime divisors of $n$ 
and our refinements 
can be obtained by constructing explicit flabby resolutions 
of $J_{G/H}$ for $H=\{1\}$ and $H\simeq C_2$ respectively.\\

{\it Refinement of the proof of Theorem \ref{th2-3} {\rm (Endo and Miyata \cite[Theorem 2.3]{EM75})}: stably rational cases with $G\simeq D_n$ $(n\geq 3:$ odd$)$ and $H=\{1\}$.} 

Let $L/k$ be a finite Galois field extension 
and $G={\rm Gal}(L/k)$. 
We consider the case where 
$G\simeq D_n=\langle x,y\mid x^n=y^2=1,y^{-1}xy=x^{-1}\rangle\leq S_{2n}$ 
$(n\geq 3:$ odd$)$ and $H=\{1\}$ 
where $x=(1\ 3\ \cdots\ 2n-1)(2\ 4\ \cdots\ 2n)$, 
$y=(1\ 2n)(2\ 2n-1)\cdots (n\ n+1)$. 

Take the exact sequence 
\begin{align*}
0\to I_G\to \bZ[G]\xrightarrow{\varepsilon} \bZ\to 0 
\end{align*}
where $\varepsilon$ is the augmentation map. 
We will construct an exact sequence of $G$-lattices 
\begin{align*}
0\to C\to P\to I_G\to 0
\end{align*}
with $P$ permutation and $C$ stably permutation, i.e. $[C]=0$. 
Then we get a flabby resolution 
\begin{align*}
0\to J_G\to (P)^\circ\to (C)^\circ\to 0
\end{align*}
of $J_G$. 
This implies that $[J_G]^{fl}=[(C)^\circ]=0$ and hence 
$R^{(1)}_{K/k}(\bG_m)$ is stably $k$-rational (by Theorem \ref{th2-1}).\\


Let $e_1,\ldots,e_{2n}$ be the standard $\bZ$-basis of $\bZ[G]$: 
\begin{align*}
x&:e_1\mapsto e_3\mapsto \cdots\mapsto e_{2n-1}\mapsto e_1, 
e_2\mapsto e_4\mapsto\cdots\mapsto e_{2n},\\
y&:e_i\leftrightarrow e_{2n+1-i}\ (1\leq i\leq n).
\end{align*}
Define $f_i:=e_i-e_{i+1}$ $(1\leq i\leq 2n-1)$ and $f_{2n}:=e_{2n}-e_1$. 
Then $f_1,\ldots,f_{2n-1}$ becomes a $\bZ$-basis of $I_G$ 
with $\sum_{i=1}^{2n} f_i=0$, i.e. $f_{2n}=-\sum_{i=1}^{2n-1}f_i$: 
\begin{align*}
x&:f_1\mapsto\cdots\mapsto f_{2n-1}\mapsto -\sum_{i=1}^{2n-1}f_i,\\
y&:f_i\leftrightarrow -f_{2n-i}\ (1\leq i\leq n).
\end{align*}
We consider a permutation $G$-lattice 
$P\simeq\bZ[G]\oplus\bZ[G]$ with $\bZ$-basis 
$\alpha_1,\alpha_3,\ldots,\alpha_{2n-1}$, 
$\beta_1,\beta_3,\ldots,\beta_{2n-1}$, 
$\alpha_2,\alpha_4,\ldots,\alpha_{2n}$, 
$\beta_2,\beta_4,\ldots,\beta_{2n}$ on which $G$ acts by 
\begin{align*}
x&:\alpha_1\mapsto \alpha_3\mapsto \cdots\mapsto\alpha_{2n-1}\mapsto\alpha_1, 
\beta_1\mapsto \beta_3\mapsto \cdots\mapsto\beta_{2n-1}\mapsto\beta_1,\\
&\ \ \ \alpha_2\mapsto \alpha_4\mapsto \cdots\mapsto\alpha_{2n}\mapsto\alpha_2, 
\beta_2\mapsto \beta_4\mapsto \cdots\mapsto\beta_{2n}\mapsto\beta_2,\\
y&:\alpha_i\leftrightarrow\beta_{2n-i}\ (1\leq i\leq 2n-1), 
\alpha_{2n}\leftrightarrow\beta_{2n}. 
\end{align*}

We define a surjective $G$-homomorphism 
\begin{align*}
\varphi: P\rightarrow I_G,\ 
&\alpha_i\mapsto f_i,\ 
\beta_i\mapsto -f_i\ (1\leq i\leq 2n). 
\end{align*}
Then we get an exact sequence of $G$-lattices 
\begin{align*}
0\to C\to P\xrightarrow{\varphi} I_G\to 0
\end{align*}
where $C={\rm Ker}(\varphi)$ with ${\rm rank}_\bZ C=2n+1$. 
We find a $\bZ$-basis 
$a_i=\alpha_i+\beta_i$\ $(1\leq i\leq n)$, 
$b_1=\sum_{i=1}^{2n}\alpha_i$ of $C$ 
and the action of $G$ on 
$C=\langle a_1,\ldots,a_{2n},b_1\rangle_\bZ$ is given by 
\begin{align*}
x&:a_1\mapsto a_3\mapsto \cdots\mapsto a_{2n-1}\mapsto a_1, 
a_2\mapsto a_4\mapsto \cdots\mapsto a_{2n}\mapsto a_2, 
b_1\mapsto b_1,\\
y&:a_i\leftrightarrow a_{2n-i}\ (1\leq i\leq n-1), 
a_{2n}\mapsto a_{2n}, 
b_1\mapsto \sum_{i=1}^{2n} a_i-b_1. 
\end{align*}

As in the proof of Theorem \ref{mainth}, 
we consider the trivial $G$-lattice 
$\langle z_0\rangle_\bZ\simeq\bZ$ and extend the map $\varphi$ from $P$ 
to $P\oplus \langle z_0\rangle_\bZ$ by $\widetilde{\varphi}(z_0)=0$: 
\begin{align*}
0\to C\oplus\langle z_0\rangle_\bZ\to P\oplus\langle z_0\rangle_\bZ
\xrightarrow{\widetilde{\varphi}} I_G\to 0
\end{align*}
where $C\oplus\langle z_0\rangle_\bZ={\rm Ker}(\widetilde{\varphi})$ 
with ${\rm rank}_\bZ (C\oplus\langle z_0\rangle_\bZ)=(2n+1)+1$. 

Because ${\rm gcd}\{2,n\}=1$, there exist $u,v\in\bZ$ such that 
$2u+nv=1$. 
Then we can get a $\bZ$-basis 
$a_{2i-1}^\prime:=a_{2i-1}+vz_0$, $a_{2i}$ $(1\leq i\leq n)$, 
$b_l^\prime:=b_l-uz_0$ $(1\leq l\leq 2)$ 
of $C\oplus\langle z_0\rangle_\bZ\simeq 
\bZ[G/\langle xy\rangle]\oplus \bZ[G/\langle x^2y\rangle]\oplus 
\bZ[G/\langle x\rangle]$ 
with $\bZ$-rank $(2n+1)+1=n+n+2$ 
on which $G$ acts by 
\begin{align*}
x&:a_1^\prime\mapsto a_3^\prime\mapsto \cdots\mapsto a_{2n-1}^\prime\mapsto a_1^\prime, 
a_2\mapsto a_4\mapsto \cdots\mapsto a_{2n}\mapsto a_2, 
b_1^\prime\mapsto b_1^\prime, b_2^\prime\mapsto b_2^\prime,\\
y&:a_{2i-1}^\prime\leftrightarrow a_{2n-(2i-1)}^\prime\ (1\leq i\leq n), 
a_{2j}\leftrightarrow a_{2n-2j}\ (1\leq j\leq n-1), 
a_{2n}\mapsto a_{2n}, 
b_1^\prime\leftrightarrow b_2^\prime.
\end{align*}
Indeed, we can confirm that 
\begin{align*}
&\ \ \ \begin{array}{ccc|cc}
\multicolumn{3}{c}{\overbrace{\hspace{15mm}}^{n}} & 
\multicolumn{2}{c}{\,\overbrace{\hspace{12mm}}^{2}}
\end{array}\\
\det&\left(
\begin{array}{ccc|cc}
 1 &  &  & 0 & v \\
  & \ddots &  & \vdots & \vdots \\
  & & 1 & 0 & v \\\hline
 0 & \cdots & 0 & 1 & -u \\
 1 & \cdots & 1 & -1 & -u \\
\end{array}
\right)=-2u-nv=-1.
\end{align*}
Then $C\oplus\langle z_0\rangle_\bZ\simeq\bZ[G/\langle xy\rangle]\oplus \bZ[G/\langle x^2y\rangle]\oplus 
\bZ[G/\langle x\rangle]$ 
is a permutation $G$-lattice and hence $[C]=0$.\qed\\

It follows from the exact sequence 
\begin{align*}
0\to J_{G}\to (P)^\circ\oplus\bZ\to (C)^\circ\oplus\bZ\to 0, 
\end{align*}
Endo and Miyata 
\cite[Proposition 1.10]{EM73} and Lenstra \cite[Proposition 1.5]{Len74} 
(see also Ono \cite[Proposition 1.2.2]{Ono63}, 
Swan \cite[Lemma 3.1]{Swa10}, Hoshi, Kang and Kitayama \cite[Proof of Theorem 6.5]{HKK14}) that 
$L(J_{G}\oplus (C)^\circ\oplus\bZ)^G\simeq L((P)^\circ\oplus\bZ)^G$. 
Hence we get: 
\begin{corollary}
Let $G\simeq D_n=\langle x,y\mid x^n=y^2=1,y^{-1}xy=x^{-1}\rangle\leq S_{2n}$ 
$(n\geq 3:$ odd$)$ and $H=\{1\}$. 
Then 
$R^{(1)}_{K/k}(\bG_m)\times R_{k_1/k}(\bG_{m,k_1})\times R_{k_2/k}(\bG_{m,k_2})\times R_{k_3/k}(\bG_{m,k_3})$ of dimension $(2n-1)+n+n+2=4n+1$ and 
$R_{L/k}(\bG_{m,L})\times R_{L/k}(\bG_{m,L})\times \bG_{m,k}$ 
of dimension $2n+2n+1=4n+1$ 
are birationally $k$-equivalent where 
$R_{k_i/k}(\bG_{m,k_i})$ and 
$R_{L/k}(\bG_{m,L})$ are $k$-rational with 
$k_1=L^{\langle xy\rangle}$, 
$k_2=L^{\langle x^2y\rangle}$, 
$k_3=L^{\langle x\rangle}$, 
$\langle xy\rangle\simeq 
\langle x^2y\rangle \simeq C_2$, 
$\langle x\rangle\simeq C_n$ and 
$[k_1:k]=[k_2:k]=n$, $[k_3:k]=2$. 
\end{corollary}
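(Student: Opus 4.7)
The plan is to extract the corollary directly from the permutation-to-permutation short exact sequence already constructed in the refinement of Theorem~\ref{th2-3}, combined with the torus-lattice anti-equivalence recalled in Section~\ref{sePre} and the stable isomorphism lemma of Endo-Miyata and Lenstra; no new lattice computation is required.

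First I would dualize the exact sequence
\[
0\to C\oplus\langle z_0\rangle_\bZ\to P\oplus\langle z_0\rangle_\bZ\xrightarrow{\widetilde{\varphi}} I_G\to 0
\]
from the refinement, obtaining
\[
0\to J_G\to (P)^\circ\oplus\bZ\to (C)^\circ\oplus\bZ\to 0
\]
in which $(P)^\circ\oplus\bZ \simeq \bZ[G]\oplus\bZ[G]\oplus\bZ$ and $(C)^\circ\oplus\bZ \simeq \bZ[G/\langle xy\rangle]\oplus\bZ[G/\langle x^2y\rangle]\oplus\bZ[G/\langle x\rangle]$, using that permutation lattices are self-dual. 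Applying the Endo-Miyata/Lenstra stable isomorphism lemma to this sequence then yields
\[
L\bigl(J_G\oplus (C)^\circ\oplus\bZ\bigr)^G \simeq L\bigl((P)^\circ\oplus\bZ\bigr)^G,
\]
as already recorded in the paragraph preceding the corollary.

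Finally I would translate this field isomorphism, via the anti-equivalence between $G$-lattices and $L$-split algebraic $k$-tori, into a birational $k$-equivalence of tori: direct sums of lattices correspond to products of tori, $J_G$ corresponds to $R^{(1)}_{K/k}(\bG_m)$ (with $K=L$ since $H=\{1\}$), each permutation summand $\bZ[G/H']$ corresponds to the $k$-rational quasi-trivial torus $R_{L^{H'}/k}(\bG_{m,L^{H'}})$ of dimension $[G:H']$, and the trivial lattice $\bZ$ corresponds to $\bG_{m,k}$. Reading off the two sides then produces exactly the two product tori in the statement, and the dimension identity $(2n-1)+n+n+2 = 2n+2n+1 = 4n+1$ gives a consistency check. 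The main obstacle is not computational — all of the lattice-theoretic work has already been carried out in the refinement — but rather the bookkeeping of composition factors; once the stabilizer subgroups $\langle xy\rangle$, $\langle x^2y\rangle$, and $\langle x\rangle$ (together with their indices in $G$) are matched to the fields $k_1, k_2, k_3$, the claim follows.
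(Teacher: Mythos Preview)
Your proposal is correct and follows essentially the same approach as the paper: the paper's own argument (recorded in the sentence immediately preceding the corollary) is precisely to dualize the sequence $0\to C\oplus\bZ\to P\oplus\bZ\to I_G\to 0$, apply the Endo--Miyata/Lenstra lemma to obtain $L(J_G\oplus (C)^\circ\oplus\bZ)^G\simeq L((P)^\circ\oplus\bZ)^G$, and then read off the tori from the permutation summands. Your bookkeeping of the correspondence between permutation summands and Weil restrictions is accurate, and no additional work beyond what you describe is needed.
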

\begin{remark}
We see that the $G$-lattice $C$ is not permutation but stably permutation as in Corollary  \ref{cor3.2}.\\
\end{remark}

{\it Refinement of the proof of Theorem \ref{th2-5} {\rm (Endo \cite[Theorem 3.1]{End11})}: stably rational cases with $G\simeq D_n$ $(n\geq 3:$ odd$)$ and $H\simeq C_2$.} 

Let $K/k$ be a finite non-Galois separable field extension, 
$L/k$ be the Galois closure of $K/k$ and $G={\rm Gal}(L/k)$. 
By Lemma \ref{lem3.1}, we may assume that 
$G\simeq D_n=\langle x,y\mid x^n=y^2=1,y^{-1}xy=x^{-1}\rangle\leq S_n$ 
$(n\geq 3:$ odd$)$ and $H=\langle xy\rangle$ 
where $x=(1\ 2\ \cdots\ n)$, 
$y=(1\ n)(2\ n-1)\cdots (\tfrac{n-1}{2}\ \tfrac{n+3}{2})$, 
$xy=(2\ n)(3\ n-1)\cdots (\tfrac{n+1}{2}\ \tfrac{n+1}{2}+1)$. 

We take the exact sequence 
\begin{align*}
0\to I_{G/H}\to \bZ[G/H]\xrightarrow{\varepsilon} \bZ\to 0
\end{align*}
where $\varepsilon$ is the augmentation map. 
We will construct an exact sequence of $G$-lattices 
\begin{align*}
0\to C\to P\to I_{G/H}\to 0
\end{align*}
with $P$ permutation and $C$ stably permutation, i.e. $[C]=0$. 
Then we get a flabby resolution 
\begin{align*}
0\to J_{G/H}\to (P)^\circ\to (C)^\circ\to 0
\end{align*}
of $J_{G/H}$. 
This implies that $[J_{G/H}]^{fl}=[(C)^\circ]=0$ and hence 
$R^{(1)}_{K/k}(\bG_m)$ is stably $k$-rational (by Theorem \ref{th2-1}).\\

Let $e_1,\ldots,e_n$ be the standard $\bZ$-basis of $\bZ[G/H]$ 
via the left regular representation of $G/H$: 
\begin{align*}
x&:e_1\mapsto\cdots\mapsto e_n\mapsto e_1,\\
y&:e_i\leftrightarrow e_{n+1-i}\ (1\leq i\leq \tfrac{n-1}{2}), 
e_{\tfrac{n+1}{2}}\mapsto e_{\tfrac{n+1}{2}}.
\end{align*}
Define $f_i:=e_i-e_{i+1}$ $(1\leq i\leq n-1)$ and $f_n:=e_n-e_1$. 
Then $f_1,\ldots,f_{n-1}$ becomes a $\bZ$-basis of $I_{G/H}$ 
with $\sum_{i=1}^n f_i=0$, i.e. $f_n=-\sum_{i=1}^{n-1}f_i$: 
\begin{align*}
x&:f_1\mapsto\cdots\mapsto f_{n-1}\mapsto -\sum_{i=1}^{n-1}f_i,\\
y&:f_i\leftrightarrow -f_{n+1-i}\ (1\leq i\leq \tfrac{n+1}{2}).
\end{align*}
We consider a permutation $G$-lattice $P\simeq\bZ[G]$ 
with $\bZ$-basis 
$\alpha_1,\ldots,\alpha_n$, $\beta_1,\ldots,\beta_n$ on which $G$ acts by 
\begin{align*}
x&:\alpha_1\mapsto\cdots\mapsto\alpha_n\mapsto\alpha_1, 
\beta_1\mapsto\cdots\mapsto\beta_n\mapsto\beta_1,\\
y&:\alpha_i\leftrightarrow\beta_{n+1-i}\ (1\leq i\leq n).
\end{align*}

We define a surjective $G$-homomorphism 
\begin{align*}
\varphi: P\rightarrow I_{G/H},\ 
&\alpha_i\mapsto f_i,\ 
\beta_i\mapsto -f_i\ (1\leq i\leq n). 
\end{align*}
Then we get an exact sequence of $G$-lattices 
\begin{align*}
0\to C\to P\xrightarrow{\varphi} I_{G/H}\to 0
\end{align*}
where $C={\rm Ker}(\varphi)$ with ${\rm rank}_\bZ C=n+1$. 
We find a $\bZ$-basis 
$a_i=\alpha_i+\beta_i$ $(1\leq i\leq n)$, 
$b_1=\sum_{i=1}^n\alpha_i$ 
of $C$ 
and the action of $G$ on 
$C=\langle a_1,\ldots,a_n,b_1\rangle_\bZ$ is given by 
\begin{align*}
x&:a_1\mapsto\cdots\mapsto a_n\mapsto a_1, b_1\mapsto b_1,\\
y&:a_i\leftrightarrow a_{n+1-i}\ (1\leq i\leq \tfrac{n-1}{2}), 
a_{\tfrac{n+1}{2}}\mapsto a_{\tfrac{n+1}{2}}, 
b_1\mapsto \sum_{i=1}^n a_i-b_1. 
\end{align*}

As in the proof of Theorem \ref{mainth}, 
we consider the trivial $G$-lattice 
$\langle z_0\rangle_\bZ\simeq\bZ$ and extend the map $\varphi$ from $P$ 
to $P\oplus \langle z_0\rangle_\bZ$ by $\widetilde{\varphi}(z_0)=0$: 
\begin{align*}
0\to C\oplus\langle z_0\rangle_\bZ\to P\oplus\langle z_0\rangle_\bZ
\xrightarrow{\widetilde{\varphi}} I_{G/H}\to 0
\end{align*}
where $C\oplus\langle z_0\rangle_\bZ={\rm Ker}(\widetilde{\varphi})$ 
with ${\rm rank}_\bZ (C\oplus\langle z_0\rangle_\bZ)=(n+1)+1$. 

Because ${\rm gcd}\{2,n\}=1$, there exist $u,v\in\bZ$ such that 
$2u+nv=1$. 
Then we can get a $\bZ$-basis 
$a_i^\prime:=a_i+vz_0$ $(1\leq i\leq n)$, 
$b_l^\prime:=b_l-uz_0$ $(1\leq l\leq 2)$ 
of $C\oplus\langle z_0\rangle_\bZ\simeq 
\bZ[G/\langle xy\rangle]\oplus 
\bZ[G/\langle x\rangle]$ 
with $\bZ$-rank $(n+1)+1=n+2$
on which $G$ acts by 
\begin{align*}
x&:a_1^\prime\mapsto\cdots\mapsto a_n^\prime\mapsto a_1^\prime, 
b_1^\prime\mapsto b_1^\prime, b_2^\prime\mapsto b_2^\prime,\\
y&:a_i^\prime\leftrightarrow a_{n+1-i}^\prime\ (1\leq i\leq \tfrac{n-1}{2}), 
a_{\tfrac{n+1}{2}}^\prime\mapsto a_{\tfrac{n+1}{2}}^\prime, 
b_1^\prime\leftrightarrow b_2^\prime.
\end{align*}
Indeed, we can confirm that 
\begin{align*}
&\ \ \ \begin{array}{ccc|cc}
\multicolumn{3}{c}{\overbrace{\hspace{15mm}}^{n}} & 
\multicolumn{2}{c}{\,\overbrace{\hspace{12mm}}^{2}}
\end{array}\\
\det&\left(
\begin{array}{ccc|cc}
 1 &  &  & 0 & v \\
  & \ddots &  & \vdots & \vdots \\
  & & 1 & 0 & v \\\hline
 0 & \cdots & 0 & 1 & -u \\
 1 & \cdots & 1 & -1 & -u \\
\end{array}
\right)=-2u-nv=-1.
\end{align*}
Then we find that 
$C\oplus\langle z_0\rangle_\bZ\simeq 
\bZ[G/\langle xy\rangle]\oplus 
\bZ[G/\langle x\rangle]$ is a permutation $G$-lattice 
and hence $[C]=0$.\qed\\

It follows from the exact sequence 
\begin{align*}
0\to J_{G/H}\to (P)^\circ\oplus\bZ\to (C)^\circ\oplus\bZ\to 0, 
\end{align*}
Endo and Miyata 
\cite[Proposition 1.10]{EM73} and Lenstra \cite[Proposition 1.5]{Len74} 
(see also Ono \cite[Proposition 1.2.2]{Ono63}, 
Swan \cite[Lemma 3.1]{Swa10}, Hoshi, Kang and Kitayama \cite[Proof of Theorem 6.5]{HKK14}) that 
$L(J_{G/H}\oplus (C)^\circ\oplus\bZ)^G\simeq L((P)^\circ\oplus\bZ)^G$. 
Hence we get: 
\begin{corollary}
Let $G\simeq D_n=\langle x,y\mid x^n=y^2=1,y^{-1}xy=x^{-1}\rangle\leq S_n$ 
$(n\geq 3:$ odd$)$ and $H=\langle xy\rangle\simeq C_2$. 
Then 
$R^{(1)}_{K/k}(\bG_m)\times R_{k_1/k}(\bG_{m,k_1})\times 
R_{k_2/k}(\bG_{m,k_2})$ of dimension $(n-1)+n+2=2n+1$ and 
$R_{L/k}(\bG_{m,L})\times \bG_{m,k}$ of dimension $2n+1$ 
are birationally $k$-equivalent where 
$R_{k_i/k}(\bG_{m,k_i})$ and 
$R_{L/k}(\bG_{m,L})$ are $k$-rational with 
$k_1=L^{\langle xy\rangle}$, 
$k_2=L^{\langle x\rangle}$, 
$\langle xy\rangle \simeq C_2$, 
$\langle x\rangle\simeq C_n$ and 
$[k_1:k]=n$, $[k_2:k]=2$. 
\end{corollary}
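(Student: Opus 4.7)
The plan is to read the corollary off directly from the permutation resolution constructed in the refinement just above. First I would observe that the preceding computation established the $G$-lattice isomorphism
\[ C \oplus \langle z_0\rangle_\bZ \;\simeq\; \bZ[G/\langle xy\rangle] \oplus \bZ[G/\langle x\rangle], \]
together with $P \simeq \bZ[G]$. Dualizing, and using that permutation $\bZ[G]$-lattices are self-dual under $(-)^\circ = \Hom_\bZ(-,\bZ)$, one gets the same identifications for $(C)^\circ \oplus \bZ$ and for $(P)^\circ \oplus \bZ \simeq \bZ[G] \oplus \bZ$.

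Next I would substitute these identifications into the field isomorphism
\[ L\bigl(J_{G/H} \oplus (C)^\circ \oplus \bZ\bigr)^G \;\simeq\; L\bigl((P)^\circ \oplus \bZ\bigr)^G \]
displayed in the refinement, which is how \cite[Proposition 1.10]{EM73} and \cite[Proposition 1.5]{Len74} are applied to the dual exact sequence. Via the anti-equivalence between $G$-lattices and $L$-split $k$-tori, each summand of a character lattice translates into a direct factor of a torus: $L(J_{G/H})^G$ is the function field of $R^{(1)}_{K/k}(\bG_m)$, $L(\bZ[G/H'])^G$ is the function field of $R_{L^{H'}/k}(\bG_{m,L^{H'}})$ for any subgroup $H' \leq G$, and $L(\bZ)^G = k(t)$ is the function field of $\bG_{m,k}$. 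Direct sums of $G$-lattices correspond to products of tori, so the field isomorphism is exactly the claimed birational $k$-equivalence.

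Finally I would verify the dimension balance $(n-1) + [k_1:k] + [k_2:k] = (n-1) + n + 2 = 2n+1 = [L:k] + 1$ and note that every Weil-restriction factor $R_{k_i/k}(\bG_{m,k_i})$ and $R_{L/k}(\bG_{m,L})$ is $k$-rational by the classical rationality of $R_{K'/k}(\bG_{m,K'})$ for any finite separable extension $K'/k$. There is essentially no remaining obstacle: all of the real work—the explicit choice of $\varphi$, the stably-permutation $\bZ$-basis of $C$, and the change of basis using $2u+nv=1$ whose invertibility is witnessed by the displayed $(n+2)\times(n+2)$ determinant—has already been carried out in the refinement, and the corollary is a direct dictionary translation from lattices to tori.
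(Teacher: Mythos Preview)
Your proposal is correct and follows essentially the same approach as the paper: both derive the corollary from the exact sequence $0\to J_{G/H}\to (P)^\circ\oplus\bZ\to (C)^\circ\oplus\bZ\to 0$ together with the identifications $P\simeq\bZ[G]$ and $C\oplus\bZ\simeq\bZ[G/\langle xy\rangle]\oplus\bZ[G/\langle x\rangle]$, invoking \cite[Proposition~1.10]{EM73} and \cite[Proposition~1.5]{Len74} for the field isomorphism and then translating to tori. Your write-up simply makes the lattice-to-torus dictionary and the self-duality of permutation lattices more explicit than the paper's one-line deduction.
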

\begin{remark}
We see that the $G$-lattice $C$ is not permutation but stably permutation as in Corollary  \ref{cor3.2}. 
\end{remark}

\end{document}